\documentclass[10pt,a4,leqno]{amsart} 
\usepackage{amsthm}
\usepackage{amsmath}
\usepackage{amssymb}
\usepackage{amsfonts}
\usepackage{mathrsfs}
\usepackage{graphicx}
\usepackage{bbm}
\usepackage[
colorlinks=true, citecolor=blue, linkcolor=blue, urlcolor=red]{hyperref}
\usepackage{color}
\usepackage[all]{xy}
\usepackage{comment}
\usepackage{here}
\allowdisplaybreaks

\makeatletter

\@addtoreset{equation}{section}
\makeatother

\makeatletter
\@namedef{subjclassname@2020}{%
  \textup{2020} Mathematics Subject Classification}
\makeatother

\newtheoremstyle{my theoremstyle}
{1.0em}                    
    {1.0em}                    
    {\itshape}                   
    {}                           
    {\scshape}                   
    {.}                          
    {.5em}                       
    {}  

\newtheoremstyle{dfn}
{1.0em}                    
    {1.0em}                    
    {}                   
    {}                           
    {\scshape}                   
    {.}                          
    {.5em}                       
    {}  
    
\theoremstyle{my theoremstyle}
   \newtheorem{thm}{Theorem}[section]
   \newtheorem{lem}[thm]{Lemma}
   \newtheorem{prop}[thm]{Proposition}
   
\theoremstyle{dfn}

\theoremstyle{remark}   
   
   \newtheorem{rmk}[thm]{{\scshape Remark}}

\newcommand{\Z}{\mathbb{Z}}
\newcommand{\Q}{\mathbb{Q}}
\newcommand{\R}{\mathbb{R}}
\newcommand{\C}{\mathbb{C}}

\newcommand{\tw}{\widetilde{\omega}}
\newcommand{\dv}{\operatorname{div}}
\newcommand{\fa}{{f_{\alpha}}}
\newcommand{\fb}{{f_{\beta}}}
\newcommand{\reg}{r_{\mathscr{D}}}
\newcommand{\K}{\kappa}
\newcommand{\ok}{\kappa^{1, 1}}
\newcommand{\s}{\sharp}
\numberwithin{equation}{section}

\date{\today}

\begin{document}
\title[$L$-function of CM elliptic curves and hypergeometric functions]{$L$-function of CM elliptic curves and generalized hypergeometric functions}
\author{Yusuke Nemoto}
\date{\today}
\address{Graduate School of Science and Engineering, Chiba University, 
Yayoicho 1-33, Inage, Chiba, 263-8522 Japan.}
\email{y-nemoto@waseda.jp}
\keywords{$L$-function; Generalized hypergeometric function; Regulator.}
\subjclass[2020]{19F27, 33C20}

\maketitle

\begin{abstract}
In this paper, we express special values of the $L$-functions of certain CM elliptic curves which are related to Fermat curves in terms of the special values of generalized hypergeometric functions by comparing Bloch's element with Ross's element in the motivic cohomology group.    
\end{abstract}

\section{Introduction}
Let $E_N$ be an elliptic curve over $\Q$ with conductor $N$ and 
$L(E_N, s)$ be the $L$-function of (the first cohomology of) $E_N$. 
By results of Deuring, when $E_N$ has complex multiplication by the integer ring $\mathscr{O}_K$ of an imaginary quadratic field $K$, 
$L(E_N, s)$ is the $L$-function of a Hecke character of $K$.
Therefore, $L(E_N, s)$ is analytically continued to the whole complex plane and satisfies a functional equation
with respect to $s$ and $2-s$. 
In particular, it has a simple zero at $s=0$ and
$$L^*(E_N, 0):=\lim_{s \to 0}s^{-1}L(E_N, s)=\pm \frac{N}{(2\pi)^2}L(E_N, 2).$$

By previous works of Otsubo \cite{Otsubo1}, Rogers-Zudilin \cite{RZ} and Ito \cite{Ito}, 
the special value of the $L$-function $L^*(E_N, 0)$, or equivalently $L(E_N, 2)$ for some $N$ is expressed in terms of special values of the generalized hypergeometric function ${_{3}F_{2}}(x)$ at $x=1$, which is defined by
$${_{3}F_{2}}\left( 
\begin{matrix}
a_1, a_2, a_3 \\
b_1, b_2
\end{matrix}
; x
\right)
=\sum_{n=0}^{\infty} \dfrac{(a_1)_n (a_2)_n (a_3)_n}{(b_1)_n  (b_2)_n}
 \dfrac{x^n}{n !}. 
$$
Here $(\alpha)_n=\Gamma(\alpha+n)/\Gamma(\alpha)$ denotes the Pochhammer symbol. 
In this paper, we treat the cases when $N=36$ and $64$, i.e.
\begin{align*}
&E_{36} \colon y^2=x^3+1, \\
&E_{64} \colon y^2=x^3-4x.
\end{align*}
Note that $E_{36}$ and $E_{64}$ have complex multiplication by $\Z[(-1+\sqrt{-3})/2]$ and $\Z[i]$ respectively.
For $\alpha, \beta, \alpha+\beta \not \in \Z_{\leq 0}$, we put
$$\widetilde{F}(\alpha, \beta)=\left(\frac{\Gamma(\alpha)\Gamma(\beta)}{\Gamma(\alpha+\beta)}\right)^2
{_{3}F_{2}}\left( 
\begin{matrix}
\alpha, \beta, \alpha+\beta-1 \\
\alpha+\beta, \alpha+\beta
\end{matrix}
; 1
\right).$$
The main theorem of this article is as follows:

\begin{thm}[Theorems \ref{main1} and \ref{main2}] \label{main}
\begin{align*}
L^*(E_{36}, 0)&=\frac{1}{2\sqrt{3}\pi}\left(\widetilde{F}\left( \frac12, \frac13 \right) - \widetilde{F}\left( \frac12, \frac23 \right) \right),\\
L^*(E_{64}, 0)&=\frac{1}{8\pi}\left(\widetilde{F}\left( \frac14, \frac14 \right) - \widetilde{F}\left( \frac34, \frac34 \right) \right).
\end{align*}
\end{thm}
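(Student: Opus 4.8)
The plan is to evaluate the Beilinson regulator
$$\reg \colon H^2_{\mathcal{M}}(E_N, \Q(2)) \longrightarrow H^1_{\mathscr{D}}(E_{N/\R}, \R(1))$$
on two a priori different elements that represent the same class, and to read off each displayed identity by comparing the two evaluations. Because $E_{36}$ and $E_{64}$ have complex multiplication, the relevant rank-one part of Beilinson's conjecture is established in these cases, so there is a canonical element whose regulator, paired against a generator of the relevant part of $H_1(E_N(\C), \Z)$, recovers $L^*(E_N, 0)$ up to an explicit elementary factor coming from the period and the functional equation recalled in the introduction. I would begin by fixing this normalization once and for all: write down the regulator current $\eta(f, g) = \log|f|\, d\arg g - \log|g|\, d\arg f$ attached to a Steinberg symbol $\{f, g\}$, and record the Beilinson-type formula that identifies $\int_\gamma \eta$ for Bloch's element with $L^*(E_N, 0)$ up to a known constant. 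This is the Bloch side, and it requires no new computation beyond careful bookkeeping of the normalization.

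For the Ross side, I would use that $E_{36}$ and $E_{64}$ are quotients of the Fermat curves of degrees $6$ and $4$, so that the piece of $H^2_{\mathcal{M}}(E_N, \Q(2))$ carrying the regulator is a direct summand, cut out by characters of $\mu_m \times \mu_m$, of the motivic cohomology of the Fermat curve. On the Fermat curve, Ross's element is the explicit symbol built from the affine coordinate functions, and its regulator integral splits over the character eigenspaces. The key computation is that the integral over a single eigencomponent evaluates, after an Euler-integral manipulation of the defining series, to a Beta factor times a ${_{3}F_{2}}(1)$, that is, to $\widetilde{F}(\alpha, \beta)$ for the pair $(\alpha, \beta)$ determined by the character. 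Since complex conjugation sends a character to its inverse, hence a parameter $\alpha$ to $1 - \alpha$, the real Deligne cohomology only sees the combination of a character with its conjugate; pairing against the correct cycle therefore produces exactly the differences in the statement, namely $\widetilde{F}(1/2, 1/3) - \widetilde{F}(1/2, 2/3)$ for $E_{36}$ and $\widetilde{F}(1/4, 1/4) - \widetilde{F}(3/4, 3/4)$ for $E_{64}$.

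The heart of the matter, and the step I expect to be the main obstacle, is the comparison of Bloch's element with Ross's element inside $H^2_{\mathcal{M}}(E_N, \Q(2))$. The eigenspace on which the regulator is supported is one-dimensional over the coefficient field, so the two elements are necessarily proportional, and the entire content lies in determining the proportionality constant; it is precisely this constant that yields the factors $1/(2\sqrt{3}\,\pi)$ and $1/(8\pi)$. I would pin it down by expressing both constructions through the same rational functions on $E_N$: writing Bloch's CM/torsion-point symbol and Ross's Fermat-coordinate symbol explicitly and comparing their divisors and tame symbols, so that the two differ by an element whose regulator is elementary and computable. Equivariance under the complex multiplication provides a strong consistency check, constraining the constant and fixing its phase. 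Once the constant is determined, equating the Bloch-side value (the $L$-value) with the Ross-side value (the hypergeometric difference) establishes the two identities at once.
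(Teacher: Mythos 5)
Your overall architecture matches the paper's: express $L^*(E_N,0)$ through the regulator of Bloch's CM element, express the regulator of the projected Ross element through Otsubo's formula in terms of $\widetilde{F}$, and reduce everything to comparing the two motivic cohomology classes. But the comparison step, which you correctly single out as the main obstacle, has a genuine gap. You assert the two elements are ``necessarily proportional'' because the relevant eigenspace is one-dimensional; that one-dimensionality is precisely the Bloch--Beilinson conjecture $\dim_{\Q}H^2_{\mathscr{M}}(E_N,\Q(2))_{\Z}=1$, which is not known, so proportionality cannot be assumed. Your fallback --- compare divisors and tame symbols so that the two symbols differ by something with ``elementary and computable'' regulator --- does not work as stated: two elements of $K_2^M(\Q(E_N))\otimes\Q$ with the same tame symbols differ by an element of $H^2_{\mathscr{M}}(E_N,\Q(2))$ itself, i.e., by an element of exactly the group whose structure is unknown, and there is no a priori reason its regulator is elementary. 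The paper's replacement for the conjecture is Brunault's Bloch map $\overline{\beta}\colon K_2(E)\otimes\Q\to B_3(E)\otimes\Q$, which is \emph{injective}; since $\overline{\beta}$ of a symbol is computed purely from the divisors of its entries, and decomposable symbols $N_{k/\Q}(\{h_i,c_i\})$ die under $\overline{\beta}$, the identity $e_{E_{36}}=2p_*(e_6)$ (resp.\ $e_{E_{64}}=2p_*(e_4)$) is established by a finite computation with torsion points, giving $12[P]$ versus $6[P]-6[R]$ and then killing $[R]$ by an auxiliary relation $\overline{\beta}(\{f,1-f\})=0$. That injectivity is the key idea missing from your plan.

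Two further points of bookkeeping. The proportionality constant between the two elements is $2$ in both cases; the prefactors $1/(2\sqrt{3}\pi)$ and $1/(8\pi)$ do not come from it, but from the period normalizations: the cup-product pairing on the Fermat curve, the degree of $p$ (so $p^*p_*(e_{N'})$ is a multiple of the eigencomponent $e_{N'}^{[a,b]}$), the real period $\Omega_{\R}$, and the factor $N(\mathbf{f})^{1/2}/2|h|$ in Bloch's formula. Finally, Bloch's regulator formula carries an undetermined sign $\pm$, and CM-equivariance will not fix it; the paper resolves it by observing that $L(E_N,2)>0$ with root number $+1$, so $L^*(E_N,0)>0$, while $\widetilde{F}(\alpha,\beta)$ is monotone decreasing in each parameter, so the hypergeometric difference is positive as well.
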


There are two different ways of proving such a relation between $L(E^*_N, 0)$ and ${}_3F_2(1)$; the analytic method (Rogers-Zudilin method)\cite{RZ} 
reducing $L(E^*_N, 0)$ to an integral of elementary functions by some transformations  
and the geometric method \cite{Otsubo1} using the real regulators. 
The following table is a summary of the known results: 
\begin{table}[H]
\begin{tabular}{|c||c|c|} \hline
  $N$ & Analytic method & Geometric method \\ \hline
   27 & Rogers-Zudilin \cite[Theorem 1]{RZ}  & Otsubo \cite[Theorem 5.2]{Otsubo1} \\
   32 & Ito \cite[Theorem 2.5]{Ito} & Otsubo \cite[Theorem 5.5]{Otsubo1} \\
   36 & N/A & Theorem \ref{main1}  \\
   64 & Ito \cite[Theorem 2.7]{Ito}  & Theorem \ref{main2} \\ \hline 
 \end{tabular}
\end{table}
\noindent The relation between $L^*(E_{64}, 0)$ and ${}_3F_2(1)$ is already known by the analytic method, however 
 the relation between $L^*(E_{36}, 0)$ and ${}_3F_2(1)$ is only predicted numerically by Otsubo \cite[Section 4.1]{Otsubo3}.

We briefly explain the geometric method.  
Let $X_{N'}$ be  the Fermat curve of degree $N'$ over $\Q$. 
Let $E_N$ be an elliptic curve of conductor $N$ in the table above, which is a Fermat quotient, i.e. 
there exists a surjective morphism 
$$p\colon X_{N'} \to E_{N} \quad \text{with} \quad (N', N)=(3, 27), (4, 32), (6, 36), (4, 64). $$
Let $e_{N'}$ be Ross's element \cite{Ross} in the integral part of the motivic cohomology $H_{\mathscr{M}}^2(X_{N'}, \Q(2))_{\Z}$. 
By projecting this element, we have
$$p_*(e_{N'}) \in H^2_{\mathscr{M}}(E_{N}, \Q(2))_{\Z}$$
in the integral part of the motivic cohomology of $E_N$.  
As $E_N$ has complex multiplication by the integer ring $\mathscr{O}_K$ of an imaginary quadratic field $K$,  
we have Bloch's element $e_{E_{N}}$ in $H_{\mathscr{M}}^2(E_{N}, \Q(2))_{\Z}$, 
whose regulator image is expressed in terms of the special value $L^*(E_N, 0)$.
The Bloch-Beilinson conjecture \cite{Beilinson, Bloch1} implies that $H^2_{\mathscr{M}}(E_{N}, \Q(2))_{\Z} \simeq \Q$, 
hence if we admit this conjecture, 
there is a constant $c \in \Q^*$ such that 
$$ e_{E_{N}}=cp_*(e_{N'})$$
since $p_*(e_{N'})$ is a non-zero element \cite{Otsubo}. 
Otsubo \cite{Otsubo1} compares the regulator images of Bloch's element for $N=27$ (resp. $N=32$) with Ross's element for $N'=3$ (resp. $N'=4$), and proves that
$$\reg(e_{E_{27}})=\reg(p_*(e_3)) \quad ({\rm resp.} \ \reg(e_{E_{32}})=\reg(p_*(e_4))),$$
where $\reg$ is the real regulator map. 
In this paper, we compare Bloch's element for $N=36$ (resp. $N=64$) with Ross's element for $N'=6$ (resp. $N'=4$) using Brunault's method, without taking the regulators 
and 
determine the constant $c$ explicitly (Propositions \ref{keyprop1} and \ref{keyprop2}), i.e. 
$$e_{E_{36}}=2p_*(e_6) \quad ({\rm resp.} \ e_{E_{64}}=2p_*(e_4)).$$
It is known that the regulator of Ross's element is expressed in terms of the special value of the generalized hypergeometric function ${}_3F_2(1)$ \cite{Otsubo}, 
hence we obtain the main theorem. 

This paper is constructed as follows. 
In Section 2, we recall the Fermat motive and the relation between its regulator and special values of the generalized hypergeometric function according to \cite{Otsubo}. 
In Section 3, we recall the relation between the regulator of an elliptic curve with complex multiplication and the special value of the $L$-function according to \cite{Bloch, Deninger, Otsubo1}.  
In Section 4, we compare Bloch's element with Ross's element and prove the main result.

\section{Regulator of Fermat motives}
First, we briefly recall the Beilinson conjecture for $K_2$ of curves (see \cite{DJZ} for a nice survey).  
Let $X$ be a geometrically connected smooth projective curve over $\Q$ and $\reg$ be the regulator map \cite{Beilinson}
$$\reg \colon H^2_{\mathscr{M}}(X, \Q(2))_{\Z}  \to H^2_{\mathscr{D}}(X_{\R}, \R(2)).$$
The source of $\reg$ is the integral part of the motivic cohomology, and we have  (cf. \cite{Nekovar})
$$H^2_{\mathscr{M}}(X, \Q(2))\simeq \operatorname{Ker}\left(K^M_2(\Q(X)) \otimes \Q \xrightarrow{T \otimes \Q} \bigoplus_{x \in X^{(1)}}\kappa(x)^* \otimes \Q \right),$$
where $X^{(1)}$ is the set of closed points of $X$, $\kappa(x)$ is the residue field at $x$ and $T=(T_x)$ is the tame symbol 
defined  by
$$T_x(\{f, g\})=(-1)^{\operatorname{ord}_x(f)\operatorname{ord}_x(g)}\left(\frac{f^{\operatorname{ord}_x(g)}}{g^{\operatorname{ord}_x(f)}}\right)(x).$$
The integral part consists of those elements that can be extended to a regular model of $X$ that is proper and flat over $\Z$, which is known to exist for curves, and the integral part is independent of the model (cf. \cite[Section 3]{DJZ}). 
The target of $\reg$ is the real Deligne cohomology, and we have an isomorphism 
$$H^2_{\mathscr{D}}(X_{\R}, \R(2)) \simeq H^1(X(\C), \R(1))^+,$$
where $\R(1)=(2\pi i) \R$ and $+$ is the $(+1)$-eigenspace with respect to the simultaneous action of the complex conjugation $F_{\infty}$ (infinite Frobenius) acting on $X(\C)$ and the complex conjugation $c_{\infty}$ on the coefficients. 
The Beilinson conjecture \cite{Beilinson} asserts firstly that $\reg \otimes \R$ is an isomorphism, and hence
$$\dim_{\Q} H^2_{\mathscr{M}}(X, \Q(2))_{\Z}=g$$
where $g$ is the genus of $X$. 
Under the first assertion, the conjecture asserts secondly that the first non-vanishing Taylor coefficient of the $L$-function $L(h^1(X), s)$ at $s=0$ (although the analytic continuation in general is highly conjectural) is expressed in terms of the determinant of the regulator matrix, which is well-defined modulo $\Q^*$.

Let $X_N$ be the Fermat curve of degree $N$ over $\Q$  defined by 
$$x_0^N+y_0^N=z_0^N.$$
Its affine equation is written as 
$$x^N+y^N=1 \quad (x=x_0/z_0,\ y=y_0/z_0). $$
Put $K=\Q(\mu_N)$ and $X_{N, K}=X_N \times_{\Q} K$. 
Fix an embedding $K \hookrightarrow \C$ and put $\zeta=\exp(2 \pi i/N)$. 
Let $G_N:=(\Z/N\Z)^{\oplus 2}$ and write an element $(r, s) \in G_N$ also as $g^{r, s}$. 
Let $G_N$ act on $X_{N, K}$ by
 $$g^{r, s} (x, y) = (\zeta^r x, \zeta^s y). $$
 For $(a, b) \in G_N$, let $p_N^{a, b}$ be the projector corresponding to the character $(r, s) \mapsto \zeta^{ar+bs}$. Then the pair $(X_{N, K}, p_N^{a, b})$ defines a motive over $K$ with coefficients in $K$. 
Put $H_N=(\Z/N\Z)^*$ with the natural action on $G_N$ and $I_N=\{(a, b) \in G_N \mid a, b, a+b \neq 0\}$. 
Let $[a, b]$ be the $H_N$-orbit of $(a, b) \in G_N$. 
We define the projector (\cite[Definition 2.3]{Otsubo}) as
$$p_N^{[a, b]}=\sum_{(c, d) \in [a, b]}p_N^{c, d} $$
which is an element of $\Q[G_N]^{H_N}$ by \cite[Lemma 2.6]{Otsubo}, hence defines a motive $X_N^{[a, b]}=(X_N, p_N^{[a, b]})$ over $\Q$ with $\Q$-coefficients. 
Then we have decompositions
$$h^1(X_{N, K}) =\bigoplus_{(a, b) \in I_N}X_N^{a, b},  \quad   h^1(X_N) =\bigoplus_{[a, b] \in H_N \backslash I_N}X_N^{[a, b]} $$
in the category of Chow motives (\cite[Proposition 2.9 (iii) and Proposition 2.11 (iii)]{Otsubo}).
Let $\omega^{a, b}$ be a differential $1$-form defined by
$$\omega^{a, b}= x^{\langle a \rangle} y^{\langle b \rangle -N} \frac{dx}{x},$$
where $\langle a \rangle \in \{1, 2, \ldots, N-1\}$ denotes the representative of $a$. 
We note that $\{\omega^{a, b}  \mid (a, b) \in I_N\}$ is a basis of $H^1(X_N(\C), \C)$ (\cite[Lemma 2.1 (ii)]{Otsubo4}) and 
$\omega^{a, b}$ is holomorphic if and only if $\langle a \rangle +\langle b \rangle <N$. 
On the other hand, there is a unique element $\kappa \in H_1(X_N(\C), \Z)$ such that 
$H_1(X_N(\C), \Z)$ is a cyclic $\Z[G_N]$-module generated by $\kappa$ and 
$$\int_{\kappa}\omega^{a, b}=\dfrac{(1-\zeta^a)(1-\zeta^b)}{N} B\left(\frac{\langle a \rangle}{N}, \frac{\langle b \rangle}{N}\right),$$
where $B(s, t)$ is the Beta function \cite[Theorem 1]{R}.   
Put 
$$\tw^{a, b}=\left(\frac1N B\left(\frac{\langle a \rangle}{N}, \frac{\langle b \rangle}{N}\right)\right)^{-1} \omega^{a, b}.$$
One sees easily that 
$$c_{\infty}\tw^{a, b}=F_{\infty}\tw^{a, b}=\tw^{-a, -b}, $$
hence $H^1(X(\C), \R(1))^+$ is generated by $\tw^{a, b}-\tw^{-a, -b}$. 
Let 
$$e_N=\{1-x, 1-y\} \in H^2_{\mathscr{M}}(X_N, \Q(2))_{\Z}$$
be the Ross element \cite{Ross} and put 
$$e^{[a, b]}_N=p^{[a, b]}_N ( e_N) \in H^2_{\mathscr{M}}(X^{[a, b]}_N, \Q(2))_{\Z}$$
as in \cite[Definition 4.5]{Otsubo}.

\begin{thm}[{\cite[Theorem 4.14]{Otsubo}}] \label{Otsubo}
If $(a, b) \in I_N$ is primitive, i.e. $\operatorname{gcd}(a, b, N)=1$, then we have
$$r_{\mathscr{D}}(e^{[a, b]}_N)=-\frac{1}{2N^2} \sum_{(c, d) \in [a, b]/{\pm 1}} \left(\widetilde{F}\left(  \frac{\langle c\rangle}{N},   \frac{\langle d \rangle}{N}\right) - \widetilde{F}\left(  \frac{\langle -c \rangle}{N},   \frac{\langle -d\rangle}{N}\right) \right)(\widetilde{\omega}^{c, d}- \widetilde{\omega}^{-c, -d}),$$
where $\widetilde{F}(\alpha, \beta)$ is as in the introduction. 
\end{thm}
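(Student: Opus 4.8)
The plan is to represent the regulator by an explicit differential form and then read off its coordinates in the basis $\{\tw^{c,d}-\tw^{-c,-d}\}$ of $H^1(X_N(\C),\R(1))^+$ by integrating against a set of generators of $H_1(X_N(\C),\Z)$. Recall that for a $K_2$-symbol $\{f,g\}$ the Beilinson regulator in $H^2_{\mathscr{D}}(X_{\R},\R(2))\simeq H^1(X_N(\C),\R(1))^+$ is represented by the Bloch--Deligne one-form
$$\eta(f,g)=\log|f|\, d\arg g-\log|g|\, d\arg f,$$
which is closed away from the zeros and poles of $f$ and $g$, and whose class pairs with a homology class $\gamma$ by $\gamma\mapsto \tfrac{1}{2\pi}\int_\gamma \eta(f,g)$. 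Applied to the Ross element $e_N=\{1-x,1-y\}$ this gives a well-defined class $\reg(e_N)$; since $p_N^{[a,b]}$ acts as an algebraic correspondence compatible with the regulator, we have $\reg(e^{[a,b]}_N)=p_N^{[a,b]}\reg(e_N)$, and the projector simply retains the $[a,b]$-isotypic part, i.e. the span of the $\tw^{c,d}-\tw^{-c,-d}$ with $(c,d)\in[a,b]$.

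Next I would pin down the coordinates. Because $H_1(X_N(\C),\Z)$ is a cyclic $\Z[G_N]$-module generated by $\kappa$, the class $\reg(e_N)$ is determined by the regulator periods $\int_{g^{r,s}\kappa}\eta(1-x,1-y)$ over the translates $g^{r,s}\kappa$. On the de Rham side the periods are explicit, $\int_\kappa\tw^{a,b}=(1-\zeta^a)(1-\zeta^b)$, and the $G_N$-action multiplies $\int_{g^{r,s}\kappa}\tw^{a,b}$ by the character value $\zeta^{ar+bs}$. Writing $\reg(e_N)=\sum_{(c,d)}\lambda^{c,d}(\tw^{c,d}-\tw^{-c,-d})$ and inverting this period system by character orthogonality on $G_N$ then expresses each coefficient $\lambda^{c,d}$ by an orbit sum over $[a,b]/\{\pm1\}$ applied to the single regulator integral over $\kappa$; this orbit averaging produces the summation and the signs in the statement, while the factor $\tfrac{1}{2N^2}$ is bookkeeping coming from the $\tfrac{1}{2\pi}$ in the pairing together with the normalization $(N^{-1}B)^{-1}$ relating $\tw^{a,b}$ to $\omega^{a,b}$.

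The heart of the argument is the evaluation of the regulator integral over $\kappa$. Using the explicit parametrization of $\kappa$ on $x^N+y^N=1$ and expanding $\log|1-x|$ (respectively $\log|1-y|$) as a power series, I would integrate term by term against the Fermat differential $x^{\langle a\rangle}y^{\langle b\rangle-N}\,dx/x$. Each monomial integrates via the Beta integral $\int_0^1 x^{s-1}(1-x)^{t-1}\,dx=B(s,t)$, and collecting the resulting Pochhammer symbols assembles precisely the series ${}_3F_2\!\left(\begin{smallmatrix}\alpha,\beta,\alpha+\beta-1\\ \alpha+\beta,\alpha+\beta\end{smallmatrix};1\right)$ with $\alpha=\langle c\rangle/N$ and $\beta=\langle d\rangle/N$, the two Beta factors accounting for the square $(\Gamma(\alpha)\Gamma(\beta)/\Gamma(\alpha+\beta))^2$ in $\widetilde{F}$. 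Taking the $\R(1)$-part, forced by the identity $c_{\infty}\tw^{a,b}=F_{\infty}\tw^{a,b}=\tw^{-a,-b}$, yields the difference $\widetilde{F}(\langle c\rangle/N,\langle d\rangle/N)-\widetilde{F}(\langle -c\rangle/N,\langle -d\rangle/N)$.

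I expect the main obstacle to be the rigorous evaluation of this regulator integral and its identification with the hypergeometric value at the boundary point $x=1$. Justifying the term-by-term integration requires care, since ${}_3F_2$ is evaluated on the circle of convergence, so one must control the tail and the behavior of $\eta(1-x,1-y)$ near the punctures where $1-x$ or $1-y$ vanishes. The primitivity hypothesis $\gcd(a,b,N)=1$ enters exactly to guarantee that the orbit $[a,b]$ has full size and that the symbol genuinely lands in the integral part of the motivic cohomology, so tracking this condition through the character-sum reduction is the second delicate point.
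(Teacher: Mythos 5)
This statement is not proved in the paper at all: it is quoted from Otsubo \cite[Theorem 4.14]{Otsubo} and used as a black box, so there is no internal proof to compare your sketch against. Judged as an outline of Otsubo's argument, your plan has the right skeleton --- represent the regulator by $\eta(f,g)=\log|f|\,d\arg g-\log|g|\,d\arg f$, decompose by characters of $G_N$, and reduce to Beta-type integrals --- but two of its load-bearing steps are wrong or missing as stated.

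First, the reduction to ``the single regulator integral over $\kappa$'' by character orthogonality does not work. The de Rham periods satisfy $\int_{g^{r,s}\kappa}\omega^{a,b}=\zeta^{ar+bs}\int_{\kappa}\omega^{a,b}$ because $\omega^{a,b}$ is a $G_N$-eigenform, but $\eta(1-x,1-y)$ is not an eigenform: $(g^{r,s})^*\eta(1-x,1-y)=\eta(1-\zeta^{r}x,1-\zeta^{s}y)$. Consequently the coefficient of $\tw^{c,d}$ must be extracted from the full Fourier transform $\sum_{r,s}\zeta^{-(cr+ds)}\int_{g^{r,s}\kappa}\eta(1-x,1-y)$, i.e.\ from $N^2$ distinct integrals (equivalently, one integrates a character sum over $r$ of $\log|1-\zeta^{r}x|$ against a character sum over $s$ of $\arg(1-\zeta^{s}y)$ along the basic path); a single integral over $\kappa$ cannot produce the answer, and it is precisely this double character sum that generates the double series. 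Second, the term-by-term Beta integration does not ``assemble precisely'' the well-poised series ${}_3F_2\bigl(\begin{smallmatrix}\alpha,\,\beta,\,\alpha+\beta-1\\ \alpha+\beta,\,\alpha+\beta\end{smallmatrix};1\bigr)$: expanding the logarithm yields sums such as $\sum_{n\ge1}n^{-1}B(\alpha+n,\beta)$, which are ${}_3F_2$'s with unit parameters, and a Thomae-type three-term transformation is required to reach the symmetric form defining $\widetilde F$; this is a genuine step in \cite{Otsubo}, not bookkeeping, and is what makes $\widetilde F(\alpha,\beta)$ symmetric in its arguments. Finally, primitivity is not what guarantees integrality of the Ross element (Ross proves that for every $N$); it is needed because when $\gcd(a,b,N)=m>1$ the motive $X_N^{[a,b]}$ descends to the Fermat curve of degree $N/m$, which changes the orbit sum and the constants. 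None of these gaps is fatal to the strategy, but each must be filled before the sketch becomes a proof.
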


\section{Regulator of elliptic curves with complex multiplication}
Let $E$ be an elliptic curve over $\Q$ with complex multiplication. 
 Bloch \cite{Bloch} (cf. \cite{Deninger}) constructs an element in the motivic cohomology of $E$ using the $C$-torsion points of $E$ for a rational integer $C$ divisible by the conductor $\bold{f}$ of the Hecke character, and proves that the special value of the $L$-function is expressed in terms of its regulator. 
Otsubo \cite{Otsubo1} modifies Bloch's result only using the $\bold{f}$-torsion points under the assumption \eqref{assump} below.  
In this paper, we use Otsubo's statement.  
 Let $K$ be an imaginary quadratic field such that $E$ has complex multiplication by the whole integer ring $\mathscr{O}_K$. 
Then the class number of $K$ is one. 
Write $\mu_K=\mathscr{O}_K^*$ for the roots of unity in $K$. 
We view $K$ as a subfield of $\C$. 
By results of Deuring, there exists a Hecke character $\chi$ from the group of ideals of $K$ prime to the conductor of $E$ such that
$$L(h^1(E), s)=L(\chi, s)= L(\overline{\chi}, s).$$
Note that $\overline{\chi}(\bold{a})= \chi(\overline{\bold{a}})$ since $E$ is defined over $\Q$. 
Let $\bold{f}$ be the conductor of $\chi$. 
We choose $\chi$ so that $\chi((\alpha))=\overline{\alpha}$ for $\alpha \in \mathscr{O}_K$ with $\alpha \equiv 1$ (mod $\bold{f}$).
Let 
$$\chi_f \colon (\mathscr{O}_K/\bold{f})^* \to K^*$$
be the finite character associated to $\chi$, i.e. 
$$\chi((\alpha))=\overline{\alpha}\chi_f(\alpha)$$
for any $\alpha \in \mathscr{O}_K$ prime to $\bold{f}$. 
We define $\chi(\bold{a})=0$ (resp. $\chi_f(\alpha)=0$) if $\bold{a}$ (resp. an integer $\alpha$) is not prime to $\bold{f}$. 
Let $\omega_E \in H^0(E(\C), \Omega^1)^{+}$ be the real holomorphic differential form normalized so that 
$$\frac1{2\pi i}\int_{E(\C)} \omega_E \wedge \overline{\omega}_E=-1,$$
where $\overline{\omega}_E:=c_{\infty}\omega_E=F_{\infty}\omega_E$. 
Let $\Gamma \subset \C$ be its period lattice, i.e.
$$\Gamma=\left\{\int_{\gamma} \omega_E \mid \gamma \in H_1(E(\C), \Z)\right\}.$$ 
Then we have an analytic isomorphism 
$$E(\C) \xrightarrow{\sim} \C / \Gamma; \quad x \to \int_O^x\omega_E,$$
where $O$ is the origin of the elliptic curve $E$.  
Choose $\Omega \in \Gamma$ such that $\mathscr{O}_K \Omega = \Gamma$, and $\nu \in \mathscr{O}_K$ such that $\bold{f}=(\nu)$. 
Let $E(\R)^0$ be the connected component of the origin with the orientation such that the real period 
$$\Omega_{\R}= \int_{E(\R)^0} \omega_E$$
is positive, and let $\Omega_{\R}=h\Omega$ with $h \in \mathscr{O}_K$. 
Let $E_{\bold{f}}$ denote the $\bold{f}$-torsion points on $E$. By the identification 
$$E_{\bold{f}} \simeq \mathscr{O}_K/\bold{f}; \quad x \mapsto x \overline{\nu}/\Omega,$$
 we view $\chi_f$ also as a character of $E_{\bold{f}}$. 

Let $E_{\rm tor}$ be the torsion subgroup of $E$ and
 $\operatorname{Div}^0(E_{\rm tor})$ be the group of divisors with degree $0$ on $E_{\rm tor}$ defined over $\Q$. 
For $\alpha \in \operatorname{Div}^0(E_{\rm tor})$, 
there exists a non-zero integer $n$ and a rational function $f \in \Q(E)^*$ such that $\dv(f)=n\alpha$. 
If we put 
$$\fa=f \otimes \frac1{n} \in \Q(E)^* \otimes \Q, $$
then $\fa$ is well-defined modulo $\Q^* \otimes \Q$, independently of the choices of $n$ and $f$. 
For $\alpha, \beta \in \operatorname{Div}^0(E_{\rm tor})$, 
$$e_0(\alpha, \beta): =\{\fa, \fb\} \in K_2^M(\Q(E)) \otimes \Q$$
is well-defined modulo the image of $(\Q(E)^* \otimes \Q^*) \otimes \Q$. 
Then there exist a number field $k$, $h_i \in k(E)^*$ and $c_i \in k^*$ such that 
  $$e(\alpha, \beta):=e_0(\alpha, \beta)+N_{k/\Q}\left( \sum\{h_i, c_i\}\right) \in H^2_{\mathscr{M}}(E, \Q(2))_{\Z} $$
(\cite[Lect. 10]{Bloch}, cf. \cite[Section 5]{Deninger}).

\begin{thm}[{\cite[Theorem 4.1]{Otsubo1}}, cf. \cite{Bloch}, { \cite[(3.2)]{Deninger}}] \label{Bloch}
Assume that we can choose $\Omega$ and $\nu$ so that 
\begin{align} 
\frac{\Omega}{\overline{\nu}} \in \R. \label{assump}
\end{align}
Put $\alpha=\displaystyle \sum_{x \in E_{\bold{f}}}([x]-[O])$, $\beta=\displaystyle \sum_{x \in E_{\bold{f}}/\mu_K}([\chi_f(\overline{x})x]-[O]) \in \operatorname{Div}^0(E_{\rm tor})$, and
 the Bloch element $e_{E}:=e(\alpha, \beta)$.  
Then, we have 
$$r_{\mathscr{D}}(e_E)=\pm \frac{N(\bold{f})^{\frac12}}{2|h|} L^*(\chi, 0) \cdot \Omega_{\R} (\omega_E - \overline{\omega}_E). $$
\end{thm}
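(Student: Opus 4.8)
The plan is to exploit that, for the genus-one curve $E$, the target $H^2_{\mathscr{D}}(E_{\R}, \R(2)) \simeq H^1(E(\C), \R(1))^+$ is one-dimensional, generated precisely by $\omega_E - \overline{\omega}_E$. Hence $r_{\mathscr{D}}(e_E)$ is automatically of the form $\lambda \cdot \Omega_{\R}(\omega_E - \overline{\omega}_E)$ for a single real scalar $\lambda$, and the entire content of the theorem is to compute $\lambda$ and identify it with $\pm \tfrac{N(\bold{f})^{1/2}}{2|h|}L^*(\chi, 0)$. First I would fix an explicit representative of the regulator of a symbol: for $\{f, g\} \in K_2^M(\Q(E))$, the class $r_{\mathscr{D}}(\{f, g\})$ is represented by the real one-form $\eta(f, g) = \log|f|\, d\arg g - \log|g|\, d\arg f$, and the scalar $\lambda$ is extracted by cupping against the dual generator, i.e.\ by the integral of $\eta(f_\alpha, f_\beta) \wedge \omega_E$ over $E(\C)$ (equivalently, by integrating $\eta$ over a generating $1$-cycle), where the normalization $\tfrac{1}{2\pi i}\int_{E(\C)}\omega_E \wedge \overline{\omega}_E = -1$ is used to isolate the coefficient.

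The next step is to evaluate this integral using the torsion-point structure of $\alpha$ and $\beta$. Under the uniformization $E(\C) \simeq \C/\Gamma$, a function $f_\alpha$ whose divisor is supported on torsion points can be written, up to a constant, as a product of translates of Kronecker's theta (or Siegel) function, so that $\log|f_\alpha| = \sum_i m_i \log|\theta(z - z_i)|$ with the $z_i$ torsion. Substituting these expressions turns the regulator integral into a finite combination of Eisenstein--Kronecker--Lerch series $\sum_{\gamma \in \Gamma}(\cdots)$ evaluated at the torsion arguments coming from $\alpha$ and $\beta$; this is the classical Bloch--Deninger reduction of the $K_2$-regulator to an Eisenstein--Kronecker series. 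The terms $N_{k/\Q}(\sum\{h_i, c_i\})$ that were added to make $e(\alpha, \beta)$ integral involve only constants $c_i$, so $d\arg c_i = 0$ and they contribute nothing to $\eta$; I would verify this at the outset to reduce to $e_0(\alpha, \beta) = \{f_\alpha, f_\beta\}$.

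The heart of the matter is the Kronecker limit formula together with the character twist built into $\beta$. The divisor $\beta = \sum_{x \in E_{\bold{f}}/\mu_K}([\chi_f(\overline{x})x] - [O])$ is engineered so that, after the identification $E_{\bold{f}} \simeq \mathscr{O}_K/\bold{f}$, the character $\chi_f$ appearing in the Eisenstein--Kronecker sum assembles the partial lattice sums into the full Hecke $L$-series, the weighted sum $\sum_{\bold{a}} \chi(\bold{a})N(\bold{a})^{-s}$ being exactly $L(\chi, s)$. Taking the relevant special value, via analytic continuation of the Eisenstein--Kronecker--Lerch series and the functional equation, then yields $L^*(\chi, 0) = L^*(h^1(E), 0)$, while the factor $\Omega_{\R} = h\Omega$ produces the arithmetic constant $\tfrac{N(\bold{f})^{1/2}}{2|h|}\Omega_{\R}$. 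The hypothesis \eqref{assump} that $\Omega/\overline{\nu} \in \R$ is used to guarantee that the torsion arguments lie symmetrically with respect to $F_\infty$, so that only the real combination $\omega_E - \overline{\omega}_E$ survives and no spurious imaginary part appears.

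The main obstacle I anticipate is the precise bookkeeping in this last step: establishing the analytic continuation and special value of the Eisenstein--Kronecker--Lerch series in the form needed, and matching the $\chi_f(\overline{x})$-twist with the Hecke character so that the character sum reproduces $L(\chi, 0)$ exactly rather than an incomplete or incorrectly normalized sum. Tracking the constant $N(\bold{f})^{1/2}/(2|h|)$, and especially pinning down the global sign $\pm$ (which depends on the orientation of $E(\R)^0$ and on the embedding $K \hookrightarrow \C$), will require care; this is also where Otsubo's refinement over Bloch's original $C$-torsion construction becomes essential, since using only the $\bold{f}$-torsion points under \eqref{assump} is exactly what makes the character sum collapse cleanly to the conductor-level $L$-value.
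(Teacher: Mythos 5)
This statement is not proved in the paper at all: it is quoted verbatim from Otsubo \cite[Theorem 4.1]{Otsubo1} (itself a refinement of Bloch \cite{Bloch} and Deninger--Wingberg \cite[(3.2)]{Deninger}), and the present article uses it as a black box. So there is no in-paper argument to compare against; your proposal has to be measured against the proofs in those references. Measured that way, your architecture is the correct one and matches the literature: represent $r_{\mathscr{D}}(\{f,g\})$ by $\eta(f,g)=\log|f|\,d\arg g-\log|g|\,d\arg f$, note that the integrality correction $N_{k/\Q}(\sum\{h_i,c_i\})$ dies because the $c_i$ are constants, expand $\log|f_\alpha|$ and $\log|f_\beta|$ via translates of the Siegel/theta function at the torsion points, and identify the resulting double lattice sum as an Eisenstein--Kronecker--Lerch series whose $\chi_f$-twist reassembles $L(\chi,s)$; the one-dimensionality of $H^1(E(\C),\R(1))^+$ then reduces everything to a single scalar. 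This is exactly Bloch's and Deninger's route, with Otsubo's contribution being the reduction from $C$-torsion to $\bold{f}$-torsion under the hypothesis \eqref{assump}.

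The gap is that the proposal stops precisely where the theorem begins. Every quantitatively nontrivial step --- the analytic continuation and special value of the Eisenstein--Kronecker--Lerch series, the verification that the $\chi_f(\overline{x})$-weighting over $E_{\bold{f}}/\mu_K$ produces the complete Hecke sum $\sum_{\bold{a}}\chi(\bold{a})N(\bold{a})^{-s}$ with no missing or doubled ideal classes, the emergence of the exact constant $N(\bold{f})^{1/2}/(2|h|)$, and the role of \eqref{assump} in killing the component along $\omega_E+\overline{\omega}_E$ --- is named as an ``obstacle'' or an ``anticipated difficulty'' rather than carried out. Since the statement asserts an exact identity of real numbers (up to sign), the bookkeeping you defer \emph{is} the proof; as written, the proposal is a faithful roadmap to \cite{Otsubo1} and \cite{Deninger} but not a self-contained argument. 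If your goal were to actually supply a proof rather than cite one, you would need to write down the Siegel-function factorization of $f_\alpha$ and $f_\beta$ explicitly, compute the regulator pairing as a Kronecker--Eisenstein series $K_{\bold{f}}(z,w,s)$ at the relevant torsion arguments, and track the normalizations $\frac{1}{2\pi i}\int_{E(\C)}\omega_E\wedge\overline{\omega}_E=-1$ and $\Omega_{\R}=h\Omega$ through to the stated constant.
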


\section{Comparison}
We compare Bloch's element with Ross's element by using the Bloch map as defined below. 
We have $3N'$ points on $X_{N'}$ 
$$O_n^{\prime}=(0:\zeta^n:1), \quad P_n^{\prime}=(\zeta^n:0:1), \quad Q_n^{\prime}=(1:\xi\zeta^n:0) \quad (n \in \Z/N'\Z),$$
where we put $\zeta=\exp(2 \pi i/N')$ and $\xi=\exp(\pi i/N')$.   
If we choose $O_0^{\prime}$ as the base point, then these points are torsion in the Jacobian \cite{GR}.  

The cup-product pairing is computed as follows (cf. \cite[Proposition 4.2]{Otsubo2}):
\begin{align} \label{cup}
\langle \tw^{a, b}, \tw^{c, d} \rangle = \left\{
\begin{array}{ll}
\dfrac{{N'}^2}{2\pi i} \cdot \dfrac{(1-\zeta^a)(1-\zeta^b)}{1-\zeta^{a+b}} & (c, d)=(-a, -b),\\
0 &   otherwise.
\end{array}
\right.
\end{align}

\subsection{The case $N=36$}
Let $N'=6$, $K=\Q(\mu_6)=\Q(\mu_3)$ and $\mathscr{O}_K$ be its integer ring. 
Let $E_{36}$ be an elliptic curve of conductor 36 over $\Q$ defined by
$$v_0^2w_0 = u_0^3+w_0^3, $$
which is naturally a quotient of $X_6$ by the morphism 
$$p:X_6 \to E_{36}; \quad (u_0 : v_0 : w_0)=( -y_0^2z_0 : x_0^3 : z_0^3). $$
It has complex multiplication by $\mathscr{O}_K$ induced by the multiplication of $\mu_6$ on $y_0$, i.e. 
$\zeta^2 \cdot (u, v)=(\zeta^2 u, v)$.  
The affine equation is written as
$$v^2=u^3+1 \quad (u=u_0/w_0, v=v_0/w_0). $$ 
We easily show that  
$$p^*\left( \frac{du}{2v} \right)=\omega^{3, 2},$$
hence it follows that $p^*$ induces an isomorphism 
$$h^1(E_{36}) \simeq X_6^{[3, 2]}$$
of motives over $\Q$ with $\Q$-coefficients. 
Since the degree of the morphism $p$ is $6$, we have 
$\langle p^*\omega_{E_{36}}, p^*\overline{\omega}_{E_{36}} \rangle = 6\langle \omega_{E_{36}}, \overline{\omega}_{E_{36}}\rangle=-6$. Hence  by \eqref{cup}, we have
$$p^*\omega_{E_{36}} = \sqrt{\frac{\pi}{6\sqrt{3}}} \cdot \widetilde{\omega}^{3, 2}.$$ 

\begin{lem} \label{hom1}
We have $H_1({E_{36}}(\C), \Z) \cong \mathscr{O}_K \cdot p_*\kappa$.  
\end{lem}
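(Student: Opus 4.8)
The plan is to realize $H_1(E_{36}(\C),\Z)$ as the period lattice $\Gamma$ of $\omega_{E_{36}}$ and to prove that $p_*\kappa$ generates it over $\mathscr{O}_K$ by a covolume comparison in $\C$. Recall from Section 3 that integration of $\omega_{E_{36}}$ gives an isomorphism $H_1(E_{36}(\C),\Z)\xrightarrow{\sim}\Gamma$ under which the complex multiplication corresponds to multiplication by $\mathscr{O}_K$ in $\C$, and that $\Gamma=\mathscr{O}_K\Omega$. In particular $H_1(E_{36}(\C),\Z)$ is stable under the $\mathscr{O}_K$-action, so $\mathscr{O}_K\cdot p_*\kappa\subseteq H_1(E_{36}(\C),\Z)$; it remains to prove the reverse inclusion.

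First I would identify the complex multiplication explicitly on $p_*\kappa$. A direct check on coordinates gives $p\circ g^{0,1}=[\zeta^2]\circ p$, where $[\zeta^2]$ denotes the complex multiplication $(u,v)\mapsto(\zeta^2u,v)$; since $[\zeta^2]^*\omega_{E_{36}}=\zeta^2\omega_{E_{36}}$, the induced action on $\Gamma$ is multiplication by $\zeta^2$. Applying $p_*$ to $g^{0,1}\kappa$ therefore yields $p_*(g^{0,1}\kappa)=\zeta^2\cdot p_*\kappa$. As $\mathscr{O}_K=\Z+\Z\zeta^2$, this shows $\mathscr{O}_K\cdot p_*\kappa=\Z\,p_*\kappa+\Z\,\zeta^2p_*\kappa$, which is a sublattice of $H_1(E_{36}(\C),\Z)$ of full rank (note $\zeta^2\notin\R$).

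It then suffices to check that this sublattice has the same covolume as $\Gamma$. Set $\lambda:=\int_{p_*\kappa}\omega_{E_{36}}=\int_\kappa p^*\omega_{E_{36}}$. Using $p^*\omega_{E_{36}}=\sqrt{\pi/(6\sqrt3)}\,\tw^{3,2}$ together with $\int_\kappa\tw^{a,b}=(1-\zeta^a)(1-\zeta^b)$ (immediate from the definition of $\tw^{a,b}$ and the period of $\omega^{a,b}$), one finds $|\lambda|^2=\frac{\pi}{6\sqrt3}|1-\zeta^3|^2|1-\zeta^2|^2=\frac{\pi}{6\sqrt3}\cdot4\cdot3=\frac{2\pi}{\sqrt3}$. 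Hence $\operatorname{covol}(\mathscr{O}_K\lambda)=|\lambda|^2\operatorname{covol}(\Z+\Z\zeta^2)=\frac{2\pi}{\sqrt3}\cdot\frac{\sqrt3}{2}=\pi$. On the other hand, the normalization $\frac1{2\pi i}\int_{E_{36}(\C)}\omega_{E_{36}}\wedge\overline{\omega}_{E_{36}}=-1$ gives $\operatorname{covol}(\Gamma)=\pi$ (since $\omega_{E_{36}}$ corresponds to $dz$ on $\C/\Gamma$). Because $\mathscr{O}_K\cdot p_*\kappa$ corresponds to $\mathscr{O}_K\lambda\subseteq\Gamma$ and the two lattices have equal covolume, they coincide, giving $H_1(E_{36}(\C),\Z)=\mathscr{O}_K\cdot p_*\kappa$.

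The only delicate points are the normalizations: verifying $\operatorname{covol}(\Gamma)=\pi$ from $\frac1{2\pi i}\int_{E_{36}(\C)}\omega_{E_{36}}\wedge\overline{\omega}_{E_{36}}=-1$, and confirming that the generator of the complex multiplication is precisely the image of $g^{0,1}$ under $p$. Granting these, the period evaluation and the covolume comparison are routine.
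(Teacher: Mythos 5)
Your proof is correct, but it takes a genuinely different route from the paper. The paper argues topologically: it shows that $\{p_*\kappa,\ \zeta^2\cdot p_*\kappa\}$ is a symplectic basis of $H_1(E_{36}(\C),\Z)$ by computing the intersection number $(p_*\kappa)\sharp(\zeta^2\cdot p_*\kappa)=1$, using that $X_6/E_{36}$ is generically Galois with group $\langle g^{2,3}\rangle$, the projection formula to rewrite the intersection number as $\sum_{(r,s)\in[2,3]}(g^{r,s}\kappa)\sharp(g^{0,1}\kappa)$, and then \cite[Corollary 3.4]{Otsubo4} to evaluate that sum. You instead transport everything to the period lattice and compare covolumes: $\operatorname{covol}(\mathscr{O}_K\lambda)=|\lambda|^2\cdot\tfrac{\sqrt3}{2}=\pi=\operatorname{covol}(\Gamma)$, forcing the full-rank sublattice $\mathscr{O}_K\cdot p_*\kappa$ to be all of $H_1$. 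Your numerical checks are right ($|1-\zeta^3|^2|1-\zeta^2|^2=12$, and the normalization $\frac1{2\pi i}\int\omega\wedge\overline{\omega}=-1$ does give $\operatorname{covol}(\Gamma)=\pi$), as is the identification $p\circ g^{0,1}=[\zeta^2]\circ p$, though for your argument you only need that the CM action on $\Gamma$ is multiplication by $\mathscr{O}_K$, not the precise relation to $g^{0,1}$. The trade-off: the paper's intersection-theoretic proof is independent of any transcendental normalization and yields an explicit symplectic basis, but imports the homological input from Otsubo's paper on the Fermat tower; your proof avoids that reference entirely at the cost of relying on Rohrlich's period formula and the cup-product normalization of $p^*\omega_{E_{36}}$ --- computations the paper carries out immediately after the lemma anyway, so nothing circular occurs. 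One small point worth making explicit if you write this up: the period map $\gamma\mapsto\int_\gamma\omega_{E_{36}}$ is an isomorphism onto $\Gamma$ intertwining the CM action with multiplication, and a sublattice of equal covolume has index one; both are standard but should be stated.
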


\begin{proof}
We prove that $\{p_*\K, \zeta^2 \cdot  p_* \K\}$ is a symplectic basis of $H_1({E_{36}}(\C), \Z)$, where $\zeta^2  \cdot p_* \kappa$ is a cycle induced by the action $\zeta^2 \cdot (u, v) = (\zeta^2u, v)$. 
For $\gamma$, $\gamma' \in H_1(X(\C), \Z)$, let $\gamma \sharp \gamma' \in \Z$ denote the intersection number.  
Note that $X_6$ is generically Galois over $E_{36}$ and $\operatorname{Gal}(X_6/E_{36}) =\langle g^{2, 3}\rangle$. 
Therefore we have 
\begin{align} \label{int1}
 (p_* \K) \s (\zeta^2 \cdot p_* \K )&= \sum_{(r, s) \in [2, 3]}(g^{r, s} \K) \s (g^{0, 1} \K) 
\end{align}
by the projection formula. 
The element $\ok$ in \cite{Otsubo4} agrees with our $\K$, hence 
it follows from Corollary 3.4 of loc. cit. that the right hand side of \eqref{int1} is equal to $1$, 
which means that 
$\{p_*\K$, $\zeta^2 \cdot  p_*\K\}$ is a symplectic basis of $H_1({E_{36}}(\C), \Z)$. 
\end{proof}

Since $\int_{p_*\kappa} \omega_{E_{36}}=\int_{\kappa} p^*\omega_{E_{36}}$
and 
$\int_{\kappa}\tw^{3, 2}=(1-\zeta^3)(1-\zeta^2)=2(1-\zeta^2)$, 
we obtain $\Gamma = \mathscr{O}_K \Omega$ with

$$\Omega=  \sqrt{\frac{\pi}{6\sqrt{3}}} \cdot 2(1-\zeta^2), \quad \Omega_{\R} =(1-\overline{\zeta^2}) \Omega= \sqrt{\frac{6\pi}{\sqrt{3}}}$$
by Lemma \ref{hom1}. 
Note that  we have the conductor $\bold{f}=(2(1-\zeta^2))=(2(1-\overline{\zeta^2}))$(cf. \cite[Section 4.1]{Otsubo3}), so if we let $\nu=2(1-\overline{\zeta^2})$, the assumption 
(\ref{assump}) is satisfied. 
Let $e_6$ be the Ross element.  
\begin{lem}
We have 
$$p_*(e_6)= \{1-v, 1+u\}.$$ 
\end{lem}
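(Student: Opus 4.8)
The plan is to realize the pushforward $p_*$ as a norm (transfer) map on Milnor $K_2$ of function fields and to evaluate it through a tower of two elementary extensions. Reading off $p$ in affine coordinates gives $u=u_0/w_0=-y^2$ and $v=v_0/w_0=x^3$, so the inclusion $p^*\colon \Q(E_{36})\hookrightarrow \Q(X_6)$ is the field extension $\Q(u,v)=\Q(x^3,y^2)\subset \Q(x,y)$ of degree $6=\deg p$. Since $e_6=\{1-x,1-y\}$ is represented in $K_2^M(\Q(X_6))\otimes\Q$ (via the embedding of $H^2_{\mathscr{M}}$ into the kernel of the tame symbol recalled in Section 2), and since $p_*$ on motivic cohomology of curves is induced by the transfer $N_{\Q(x,y)/\Q(u,v)}$ on $K_2^M$ of the generic points, it suffices to compute this norm of $\{1-x,1-y\}$.

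First I would factor the extension as $\Q(x,y)\supset \Q(x,y^2)\supset \Q(x^3,y^2)$, with the two steps of degrees $2$ and $3$, and use $N_{\Q(x,y)/\Q(x^3,y^2)}=N_{\Q(x,y^2)/\Q(x^3,y^2)}\circ N_{\Q(x,y)/\Q(x,y^2)}$. For the quadratic step, $1-x$ lies in the base field $\Q(x,y^2)$, so the projection formula reduces the norm to $\{1-x,\,N_{\Q(x,y)/\Q(x,y^2)}(1-y)\}$; as the conjugate of $y$ is $-y$, the inner norm is $(1-y)(1+y)=1-y^2=1+u$. This yields $N_{\Q(x,y)/\Q(x,y^2)}\{1-x,1-y\}=\{1-x,1+u\}$.

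For the cubic step, now $1+u=1-y^2$ lies in the base field $\Q(x^3,y^2)$, so the projection formula together with the antisymmetry $\{a,b\}=-\{b,a\}$ gives $N_{\Q(x,y^2)/\Q(x^3,y^2)}\{1-x,1+u\}=\{N_{\Q(x,y^2)/\Q(x^3,y^2)}(1-x),\,1+u\}$. Here $x$ has minimal polynomial $t^3-x^3$ over $\Q(x^3,y^2)$, with roots $x,\omega x,\omega^2 x$ where $\omega=\zeta^2$, so $N(1-x)=\prod_{i=0}^{2}(1-\omega^i x)=1-x^3=1-v$. Combining the two steps gives $p_*(e_6)=\{1-v,1+u\}$, as claimed.

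The main obstacle, and really the only delicate point, is the bookkeeping at the cubic step: the extension $\Q(x,y^2)/\Q(x^3,y^2)$ is not Galois, since the constant field of these function fields is $\Q$, which does not contain $\omega$, so $t^3-x^3$ is irreducible but not split over the base; one must therefore compute the norm as the product over all roots of the minimal polynomial in an algebraic closure rather than over conjugates inside $\Q(x,y^2)$, and the telescoping $\prod_i(1-\omega^i x)=1-x^3$ is what makes this clean. I would also take care with the sign from $u=-y^2$ and with the antisymmetry of the symbol when applying the projection formula, and note at the outset that the identification of $p_*$ with the transfer on $K_2^M$ is the standard functoriality of motivic cohomology for finite morphisms of smooth curves.
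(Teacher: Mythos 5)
Your proof is correct and is essentially the paper's argument: the paper also factors the degree-$6$ pushforward into a degree-$3$ and a degree-$2$ norm and applies the projection formula twice, computing $N(1-x)=1-x^3=1-v$ and $N(1-y)=(1-y)(1+y)=1+u$ exactly as you do. The only difference is the order of the two steps --- the paper passes through the intermediate curve $C\colon v^2+y^6=1$ (function field $\Q(x^3,y)$) whereas you pass through $\Q(x,y^2)$ --- which is immaterial.
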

\begin{proof}
Let $C$ be a projective curve over $\Q$ defined by an affine equation 
$$v^2+y^6=1. $$
Then there are morphisms
\begin{align*}
&q \colon X_{6} \to C ;  \quad (x, y) \mapsto  (v, y )=(x^3,  y), \\
&r \colon C \to E_{36}; \quad (v,  y) \mapsto (u, v)=(-y^2, v). 
\end{align*}  
We have 
\begin{align*}
&p_*(\{1-x, 1-y\})=r_*q_*(\{1-x, 1-y\})  = r_*(\{q_*(1-x), 1-y\}) \\
&=r_*(\{1-x^3, 1-y\}) 
=\{1-x^3, r_*(1-y)\} 
=\{1-v, 1+u\}. 
\end{align*}
\end{proof}
The following proposition is the key to the main theorem. 
\begin{prop} \label{keyprop1}
Let $e_{E_{36}}$ be the Bloch element and $e_6$ be the Ross element. Then we have $e_{E_{36}}=2p_*(e_6)$ in $H_{\mathscr{M}}^2({E_{36}}, \Q(2))_{\Z}$. 
\end{prop}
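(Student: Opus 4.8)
The starting point is that the symbol $\{\fa,\fb\}$ underlying the Bloch element is bilinear in the pair of divisors: since $f_{\alpha_1+\alpha_2}=f_{\alpha_1}f_{\alpha_2}$ in $\Q(E_{36})^*\otimes\Q$, the assignment $(\alpha,\beta)\mapsto\{\fa,\fb\}$ is additive in each argument modulo the image of $(\Q(E_{36})^*\otimes\Q^*)\otimes\Q$, which is exactly the ambiguity allowed in $e_0(\alpha,\beta)$. The plan is therefore to reduce everything to the \emph{elementary symbols} $S(x,z):=\{f_{[x]-[O]},f_{[z]-[O]}\}$ attached to pairs of torsion points, where $f_{[x]-[O]}\in\Q(E_{36})^*\otimes\Q$ is the Bloch function with $n_x\cdot\dv(f_{[x]-[O]})=n_x([x]-[O])$ for $n_x=\operatorname{ord}(x)$. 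Expanding $\alpha$ and $\beta$ into single-point divisors, the Bloch element becomes $e_{E_{36}}=\sum_{x\in E_{\bold{f}}}\sum_{y\in E_{\bold{f}}/\mu_K}S\big(x,\chi_f(\overline{y})y\big)$ modulo decomposables, the $O$-terms dropping because $f_{[O]-[O]}$ is constant.

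Next I would make the torsion and the character explicit. Since $2$ is inert and $(1-\oz^2)$ is the prime above $3$ in $\mathscr{O}_K$, the identification $E_{\bold{f}}\simeq\mathscr{O}_K/\bold{f}$ gives $E_{\bold{f}}=E[2]\oplus\langle P\rangle$, where $E[2]=\{O,(-1,0),(\zeta,0),(\zeta^5,0)\}$ is the $v=0$ locus and $P=(0,1)$ generates the $(1-\oz^2)$-torsion (indeed $\oz^2\cdot P=P$, so $(1-\oz^2)P=O$). On the Ross side, $\dv(1-v)=3([P]-[O])$ and $\dv(1+u)=2([(-1,0)]-[O])$, so $f_{[P]-[O]}=(1-v)\otimes\tfrac13$ and $f_{[(-1,0)]-[O]}=(1+u)\otimes\tfrac12$, whence $p_*(e_6)=\{1-v,1+u\}=6\,S\big(P,(-1,0)\big)$. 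The proposition is thus equivalent to the identity $\sum_{x}\sum_{y}S\big(x,\chi_f(\overline{y})y\big)=12\,S\big(P,(-1,0)\big)$ in $H^2_{\mathscr{M}}(E_{36},\Q(2))_{\Z}$.

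To prove this identity I would invoke the standard relations satisfied by Bloch's elementary symbols in the sense of Brunault: the (anti)symmetry $S(x,z)=-S(z,x)$, the Steinberg relation applied to the curve identities $(1-v)(1+v)=-u^3$ and $(u+1)(u^2-u+1)=v^2$, and the distribution relations coming from the $\mathscr{O}_K$-action $\oz^2\cdot(u,v)=(\oz^2u,v)$, which cyclically permutes the nonzero $2$-torsion points and fixes $P$. Evaluating $\chi_f$ on the three nontrivial $\mu_K$-orbits (a $2$-torsion orbit, the $3$-torsion orbit of $P$, and an order-$6$ orbit) and summing with these weights, the $48$ elementary symbols should collapse: the $2$-torsion and $6$-torsion contributions recombine, via the distribution relations, into multiples of $S(P,(-1,0))$, leaving residual coefficient $12$. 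I expect the factor $2$ relative to a naive count to reflect the orbit sizes under $\mu_K$ together with the inert prime $2$ in $\bold{f}=(2(1-\oz^2))$.

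The main obstacle is this final collapse together with integrality. I must verify that the symbol manipulations take place in the \emph{integral} part $H^2_{\mathscr{M}}(E_{36},\Q(2))_{\Z}$ and not merely in $K^M_2(\Q(E_{36}))\otimes\Q$ modulo decomposables: concretely, the correction terms $N_{k/\Q}(\sum\{h_i,c_i\})$ in the definition of $e(\alpha,\beta)$ must be shown to contribute only decomposable classes, so that the distribution relations I use---which a priori hold only up to such terms---yield the stated equality on the nose. Establishing the precise distribution relation for the mixed order-$6$ orbit, and checking that no further rational scalar is introduced, is the delicate point; once that is in place, the identity $e_{E_{36}}=12\,S(P,(-1,0))=2\,p_*(e_6)$ follows.
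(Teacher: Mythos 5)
Your strategy---expanding $e_0(\alpha,\beta)$ bilinearly into elementary symbols $S(x,z)$ and collapsing them by distribution and Steinberg relations inside $K_2$---is genuinely different from the paper's, which instead applies Brunault's \emph{injective} map $\overline{\beta}\colon K_2(E_{36})\otimes\Q\to B_3(E_{36})\otimes\Q$ to both $e_0(\alpha,\beta)$ and $p_*(e_6)$, computes the two images purely from the divisors (obtaining $12[P]$ and $6[P]$ after one Steinberg symbol $\{\frac12(1-v),\frac12(1+v)\}$ kills the stray class $[R]$, $R=(2:-3:1)$), and concludes by injectivity. As written, however, your argument has a genuine gap exactly where you flag it: the ``final collapse'' of the elementary symbols to $12\,S(P,(-1,0))$ is asserted, not carried out, and the relations you invoke (distribution relations for the $\mathscr{O}_K$-action, Steinberg relations from $(1-v)(1+v)=-u^3$) are a priori available only up to decomposable classes and torsion. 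Without an injectivity device such as $\overline{\beta}$ there is no way to promote these formal manipulations to the exact identity in $H^2_{\mathscr{M}}(E_{36},\Q(2))_{\Z}$ that the proposition asserts; this collapse is not a deferrable verification but the entire content of the statement.

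There are also concrete errors in your setup. First, the bookkeeping: since $\chi_f$ vanishes on non-units and $(\mathscr{O}_K/\bold{f})^*/\mu_K=\{1\}$, the divisor $\beta$ is the single term $[P]-[O]$, so $\dv(\fb)=[P]-[O]$ and the bilinear expansion of $e_0(\alpha,\beta)$ produces at most the $12$ symbols $\sum_{x\in E_{\bold{f}}}S(x,P)$, not $48$; there is no ``evaluation of $\chi_f$ on three nontrivial orbits'' to perform. Second, your conventions conflict with the paper's: the origin of $E_{36}$ is $O=(-1:0:1)$, the image of $O_0'$, under which $P=(0:1:1)$ corresponds to $1\in\mathscr{O}_K/\bold{f}$ and has additive order $6$ (since $\bold{f}\cap\Z=6\Z$), whereas you place the origin at infinity and make $P$ a $3$-torsion point generating the $(1-\oz^2)$-part. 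This changes the identification $E_{\bold{f}}\simeq\mathscr{O}_K/\bold{f}$, the denominators in $\fa$ and $\fb$, and hence every coefficient in the collapse you would need to carry out. To close the argument you should either import the injectivity of $\overline{\beta}$ and compare images in $B_3(E_{36})\otimes\Q$ as the paper does---where the norm correction terms $N_{k/\Q}(\sum\{h_i,c_i\})$ visibly die because constants have trivial divisor---or supply a complete, convention-consistent proof of the distribution identity $\sum_{x\in E_{\bold{f}}}S(x,P)=2\{1-v,1+u\}$ in $H^2_{\mathscr{M}}(E_{36},\Q(2))_{\Z}$, which you have not done.
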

To prove the proposition, we use the Bloch map defined as follows. 
Let $E$ be an elliptic curve over $\Q$ and 
$\Z[E(\overline{\Q})]$ be the group algebra of $E(\overline{\Q})$.
We define the Bloch map $\beta$ by
$$\beta \colon \overline{\Q}(E)^* \otimes_{\Z} \overline{\Q}(E)^* \to \Z[E(\overline{\Q})]; \quad  f \otimes g \mapsto \sum_{i, j} m_in_j[p_i-q_j],$$
where $\dv(f)=\sum_im_i[p_i]$ and $\dv(g)=\sum_jn_j[q_j]$ are divisors of $f$ and $g$ respectively.  
Let $R_3(E)$ be the subgroup of $\Z[E(\overline{\Q})]$ generated by the divisors $\beta (f \otimes (1-f))$ with
$f \in \overline{\Q}(E)$, $f \neq 0, 1$ as well as the divisors $[p]+[-p]$ with $p \in E(\overline{\Q})$. 
Brunault \cite[Definition B.3]{AC} defines a modification of the Bloch group, which is originally defined by Goncharov-Levin \cite[Definition 3.1]{GL},  
by 
$$B_3(E)=\left(\Z[E(\overline{\Q})]/R_3(E)\right)^{\operatorname{Gal}(\overline{\Q}/\Q)}.$$
In the group $B_3(E) \otimes \Q$, we have the relation $[p]+[-p]=0$ for any point $p$, hence $[p]=0$ if $p$ is a $2$-torsion point.  
Then $\beta$ induces a map
$$\overline{\beta} \colon K_2(E) \otimes \Q \to B_3(E) \otimes \Q$$ 
and $\overline{\beta}$ is injective (see \cite[Theorem B.5]{AC}). 
Hence any equality in $H_{\mathscr{M}}^2(E, \Q(2))$ can be proved by comparing the divisors. 
\begin{proof}[Proof of Proposition \ref{keyprop1}]
Let $O=(-1 : 0 : 1)$, $P=(0 : 1: 1)$ and $Q=(0 : 1: 0)$ be the images under $p$ of $O_0^{\prime}$, $P_0^{\prime}$ and $Q_0^{\prime}$ respectively, and $O$ be the origin of $E_{36}$. 
Note that $(\mathscr{O}_K/\bold{f})^*/\mu_K=\{1\}$, and 
since 
$$\int_O^P\omega_{E_{36}}= \int_{O'_0}^{P'_0} p^*\omega_{E_{36}}= \dfrac{\Omega}{\overline{\nu}} \int_{O'_0}^{P'_0}\tw^{3, 2}=\dfrac{\Omega}{\overline{\nu}}, $$
$P$ corresponds to 1 under $E_{\bold{f}} \simeq \mathscr{O}_K/\bold{f}$. 
Then the divisors of Theorem \ref{Bloch} are
$$
\dv(f_{\alpha})=\sum_{x \in E_{\bold{f}}}[x]-12[O], \quad 
\dv(f_{\beta})=[P]-[O].
$$
Applying the map $\overline{\beta}$ to $e_0(\alpha, \beta)=\{\fa, \fb\}$, we have
\begin{align*} 
\overline{\beta}(e_0(\alpha, \beta))&=\sum_{x \in E_{\bold{f}}}[x-P] -\sum_{x \in E_{\bold{f}}}[x-O]
-12[O-P]+12[O-O] \\
&=\sum_{x \in E_{\bold{f}}}[x-P] -\sum_{x \in E_{\bold{f}}}[x]
-12[-P]+12[O].
\end{align*}
We note that $P$ is an $\bold{f}$-torsion point on $E_{36}$, hence we have 
$$\sum_{x \in E_{\bold{f}}}[x-P] =\sum_{x \in E_{\bold{f}}}[x].$$
 Since $O$ is a $2$-torsion point and 
 $-[P]=[-P]$ in $B_3(E_{36}) \otimes \Q$,  we have 
\begin{align}
\overline{\beta}(e_0(\alpha, \beta))&=12[P]. \label{beta1}
\end{align}
On the other hand, 
the divisors of $1-v$ and $1+u$ are
\begin{align*}
\operatorname{div}(1-v)=3([P]-[Q]), \quad 
\operatorname{div}(1+u)=2([O]-[Q]). 
\end{align*}
Applying the map $\overline{\beta}$ to $p_*(e_6)$, we have
\begin{align*}
\overline{\beta}(p_*(e_6))&=6([P-O] +[Q-Q]-[P-Q]-[Q-O] 
) \\
&=6([P] +[O]-[R]-[Q] ),
\end{align*}
where we put $R=(2 : -3 : 1)$. 
Note that $Q$ is a 2-torsion point, we have
\begin{align*}
\overline{\beta}(p_*(e_6))=6([P] -[R] ).
\end{align*}
We are to show  that $[R]=0$ in $B_3(E_{36})$ (cf. \cite[Section 4.2]{Mellit}). 
Put 
$$\{f, g\}=\left\{\frac12 (1-v), \frac12(1+v)\right\}. $$
Then we have $\overline{\beta}(\{f, g\})=0$ since $f+g=1$. 
The divisors of $f$ and $g$ are
\begin{align*}
\operatorname{div}(f)=3([P]-[Q]), \quad 
\operatorname{div}(g)=3([-P]-[Q]), 
\end{align*}
hence 
\begin{align*}
\overline{\beta}(\{f, g\})&=9([P+P] +[P-Q]-[Q+P]-[Q-Q] ) \\
&=9([-R] -2[R] )=-27[R]=0,
\end{align*}
which concludes that $[R]=0$. 
Therefore, we obtain
\begin{align}
\overline{\beta}(p_*(e_6))=6[P]. \label{beta2}
\end{align}
For a number field $k$, a symbol of the form $N_{k/\Q}\left( \sum\{h_i, c_i\}\right)$ 
($h_i \in k(E_{36})^*$ and $c_i \in k^*$)
is killed by $\overline{\beta}$. 
For the image of $N_{k/\Q}(\{h_i, c_i\})$ in $K_2^M(\overline{\Q}(E_{36}))$ is $\sum_{\sigma : k \hookrightarrow \overline{\Q}}\{\sigma(h_i), \sigma(c_i)\}$, which is killed by $\beta$ since $\sigma(c_i)$ is constant.  
Therefore we have 
$$\overline{\beta}(e_{E_{36}})
=\overline{\beta}(e_0(\alpha, \beta))
=2\overline{\beta}(p_*(e_6)) $$ 
by comparing (\ref{beta1}) and $(\ref{beta2})$. 
Since $\overline{\beta}$ is injective, it concludes that 
$$e_{E_{36}}=2p_*(e_6) \in H_{\mathscr{M}}^2({E_{36}}, \Q(2))_{\Z},$$
which finishes the proof. 
\end{proof}

\begin{rmk}
We can prove that
$$\reg(e_{E_{36}})=2\reg(p_*(e_6))$$
without Proposition \ref{keyprop1}. 
We can take 
$$\fa=\frac{(1+u^3)^3(1-v^2)^2(8-u^3)^6}{(1+u)^{36}} \otimes \frac16, \quad \fb=\frac{(1-v)^2}{(1+u)^3} \otimes \frac16. $$
The regulator map $\reg$ extends to $K_2^M(\C(E_{36}))$  (cf. \cite[Section 1]{Ross}), hence  
we have
\begin{align*}
&36\reg(e_{E_{36}})=\reg(6\{1+u^3, 1-v\}+4\{1-v^2, 1-v\}+12\{8-u^3, 1-v\}\\
& -9\{1+u^3, 1+u\}-6\{1-v^2, 1+u\}-18\{8-u^3, 1+u\}-72\{1+u, 1-v\}) 
\end{align*}
 since $\reg \left(N_{k/\Q}(\{h_i, c_i\})\right)= \sum_{\sigma:k \hookrightarrow \C}\reg \left(\{\sigma(h_i), \sigma(c_i)\}\right)=0$.  
Note that $\{1+u^3, 1-v\}=\{v^2, 1-v\}=0$, $\{1-v^2, 1+u\}=\{-u^3, 1+u\}=0$. 
Also 
$\{8-u^3, 1-v\}=\{9-v^2, 1-v\}$, 
$\{1-v^2, 1-v\}$, $\{1+u^3, 1+u\}$ and $\{8-u^3, 1+u\}$
is killed by $\reg$ 
since these elements come from a quotient rational curve (\cite[Lemma 1]{Ross}). 
Hence we have
$$\reg(e_{E_{36}})=2\reg(\{1-v, 1+u\})=2\reg(p_*(e_6)). $$
\end{rmk}

\begin{thm} \label{main1}
Let $E_{36}$ be an elliptic curve of conductor 36 over $\Q$. Then we have
$$L^*(E_{36}, 0)=\frac{1}{2\sqrt{3}\pi}\left(\widetilde{F}\left( \frac12, \frac13 \right) - \widetilde{F}\left( \frac12, \frac23 \right) \right).$$
\end{thm}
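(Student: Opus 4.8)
The plan is to assemble Theorem \ref{main1} by combining the two main inputs already established in the excerpt: the regulator formula for Bloch's element (Theorem \ref{Bloch}) and the regulator formula for Ross's element (Theorem \ref{Otsubo}), glued together by the comparison identity $e_{E_{36}}=2p_*(e_6)$ from Proposition \ref{keyprop1}. Since the regulator map $r_{\mathscr{D}}$ is $\Q$-linear and $p_*$ is compatible with it, the identity in motivic cohomology immediately gives $r_{\mathscr{D}}(e_{E_{36}})=2\,p_*\bigl(r_{\mathscr{D}}(e_6)\bigr)$. The whole proof is then a matter of evaluating both sides of this equation in the concrete coordinates set up for $N=36$ and reading off $L^*(E_{36},0)$.

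First I would compute the left-hand side. Applying Theorem \ref{Bloch} to $E_{36}$, with the data already recorded in the excerpt --- the conductor $\bold{f}=(2(1-\oz^2))$ so that $N(\bold{f})=|2(1-\oz^2)|^2$, the real period $\Omega_{\R}=\sqrt{6\pi/\sqrt3}$, and $h$ determined by $\Omega_{\R}=h\Omega$ --- yields $r_{\mathscr{D}}(e_{E_{36}})=\pm\frac{N(\bold{f})^{1/2}}{2|h|}L^*(\chi,0)\,\Omega_{\R}(\omega_{E_{36}}-\overline{\omega}_{E_{36}})$, and I would use $L(\chi,0)=L(h^1(E_{36}),0)=L^*(E_{36},0)$ via the leading Taylor coefficient normalization from the introduction. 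The arithmetic here is to pin down $|h|$ and $N(\bold{f})^{1/2}$ explicitly from $\Omega=2\sqrt{\pi/(6\sqrt3)}(1-\oz^2)$; this is routine once the lattice generator is known.

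Next I would compute the right-hand side. Because $p^*$ identifies $h^1(E_{36})$ with the Fermat motive $X_6^{[3,2]}$ and $(3,2)$ is primitive modulo $6$, Theorem \ref{Otsubo} gives $r_{\mathscr{D}}(e_6^{[3,2]})$ as an explicit combination of $\widetilde{F}(\langle c\rangle/6,\langle d\rangle/6)$ over the orbit $[3,2]/\pm1$, times the basis vector $\tw^{c,d}-\tw^{-c,-d}$. The orbit of $(3,2)$ under $H_6=(\Z/6)^\times$ contains the representatives giving the arguments $(1/2,1/3)$ and $(1/2,2/3)$, which is exactly why those two $\widetilde{F}$-values appear in the statement. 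I would then push this forward by $p_*$, using the normalization $p^*\omega_{E_{36}}=\sqrt{\pi/(6\sqrt3)}\,\tw^{3,2}$ and the adjunction between $p^*$ and $p_*$ under the cup-product pairing \eqref{cup}, to express $p_*(r_{\mathscr{D}}(e_6))$ in terms of $\omega_{E_{36}}-\overline{\omega}_{E_{36}}$.

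The main obstacle, and the only genuinely delicate part, is bookkeeping the normalizing constants so that the two sides match cleanly: reconciling the factor $-\frac{1}{2N^2}$ with $N=6$ from Theorem \ref{Otsubo}, the factor $2$ from Proposition \ref{keyprop1}, the period factor $\sqrt{\pi/(6\sqrt3)}$ relating $\tw^{3,2}$ to $\omega_{E_{36}}$, and the $\frac{N(\bold{f})^{1/2}}{2|h|}\Omega_{\R}$ appearing on the Bloch side, all while tracking the relation between $H^2_{\mathscr{D}}$ and the $(+1)$-eigenspace $H^1(X(\C),\R(1))^+$. I expect the overall sign ambiguity $\pm$ to be harmless since $L^*(E_{36},2)>0$ forces the correct sign, but the rational and $\pi$-power factors must be verified to collapse to the asserted prefactor $\frac{1}{2\sqrt3\,\pi}$. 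Once these constants are shown to cancel correctly, equating $r_{\mathscr{D}}(e_{E_{36}})=2\,p_*(r_{\mathscr{D}}(e_6))$ and dividing out the common nonzero vector $\omega_{E_{36}}-\overline{\omega}_{E_{36}}$ isolates $L^*(E_{36},0)$ and gives the stated formula.
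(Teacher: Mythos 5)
Your proposal follows essentially the same route as the paper: Theorem \ref{Bloch} for the regulator of the Bloch element, Proposition \ref{keyprop1} to replace $e_{E_{36}}$ by $2p_*(e_6)$, Theorem \ref{Otsubo} for the hypergeometric side (the paper organizes your $p_*$-pushforward step dually, applying $p^*$ to both sides and using $p^*p_*(e_6)=6e_6^{[3,2]}$), and positivity of $L(E_{36},2)$ together with root number $1$ to fix the sign. The only detail you omit is the paper's opening observation that the isogeny class of conductor-$36$ elliptic curves over $\Q$ is unique, which is what reduces the general statement to the single model $v^2=u^3+1$.
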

\begin{proof}
It is known that the isogeny class over $\Q$ of $E_{36}$ is unique \cite{LMDBF}, hence it suffices to show the case $E_{36}:v^2=u^3+1$.  
By Theorem \ref{Bloch}, we have
$$r_{\mathscr{D}}(e_{E_{36}})=\pm L^*(E_{36}, 0) \cdot  \sqrt{\frac{6\pi}{\sqrt{3}}}(\omega_{E_{36}}-\overline{\omega}_{E_{36}}). $$
By applying $p^*$ on the both sides and using Proposition \ref{keyprop1}, we have
\begin{align*}
r_{\mathscr{D}}(p^*p_*(e_6))&=\pm\frac12 \cdot \sqrt{\frac{6\pi}{\sqrt{3}} }L^*(E_{36}, 0)(p^*\omega_{E_{36}} -p^*\overline{\omega}_{E_{36}}) \\
&=\pm\frac12 \cdot \sqrt{\frac{6\pi}{\sqrt{3}} }\cdot  \sqrt{\frac{\pi}{6\sqrt{3}}}L^*(E_{36}, 0)(\tw^{3, 2}-\tw^{3, 4}). 
\end{align*}
We note that 
$$p^*p_*(e_6)=6p^{[3,  2]}e_6=6e_6^{[3, 2]},$$ 
hence by Theorem \ref{Otsubo}, 
we obtain the equality except for the sign.
Since $L(E_{36}, 2)$ is positive and the root number (the sign of the functional equation) is $1$, $L^*(E_{36}, 0)$ is also positive. 
It is known that $\widetilde{F}(\alpha, \beta)$ is monotonously decreasing with respect to each parameter \cite[Proposition 4.25]{Otsubo}, hence the right hand side is also positive. 
\end{proof}

\begin{rmk}
Note that each of $E_{108}$, $E_{144}$ and $E_{432}$ is also a quotient of the Fermat curve of degree 6. 
However, we cannot use the same method for $E_{108}$ and $E_{144}$ since 
these do not satisfy the assumption (\ref{assump}). 
Although $E_{432}$ satisfies the assumption (\ref{assump}), it seems to be difficult to compare Bloch's element with Ross's element in $H^2_{\mathscr{M}}(E_{432}, \Q(2))_{\Z}$ since these are so complicated. 
\end{rmk}

\subsection{The case $N=64$}
Let $K=\Q(\mu_4)$ and $\mathscr{O}_K$ be its integer ring. 
Let $E_{64}$ be an elliptic curve of conductor 64 over $\Q$ defined by
$$v_0^2w_0=u_0^3-4u_0w_0^2, $$
which is naturally a quotient of $X_4$ by the morphism 
$$p:X_4 \to E_{64}; \quad (u_0 : v_0 : w_0)=(2x_0(y_0^2+z_0^2): 4y_0(y_0^2+z_0^2): x_0^3). $$
It has complex multiplication by $\mathscr{O}_K$ induced by the multiplication of $\mu_4$ on $x_0$, i.e. $\zeta \cdot (u, v)=(- u, \zeta v)$. 
The affine equation is written as
$$v^2=u^3-4u \quad (u=u_0/w_0, \ v=v_0/w_0). $$
We easily show that  
$$p^*\left( \frac{du}{2v} \right)=-\frac12\omega^{1, 1},$$
hence it follows that $p^*$ induces an isomorphism 
$$h^1(E_{64}) \simeq X_4^{[1, 1]}$$
of motives over $\Q$ with $\Q$-coefficients. 
Since the degree of the morphism $p$ is 2, we have $\langle p^*\omega_{E_{64}}, p^*\overline{\omega}_{E_{64}} \rangle = 2\langle \omega_{E_{64}}, \overline{\omega}_{E_{64}}\rangle=-2$. Hence by \eqref{cup}, we have
$$p^*\omega_{E_{64}} = \frac{\sqrt{\pi}}{2} \widetilde{\omega}^{1, 1}.$$ 

\begin{lem} \label{hom2}
We have $H_1({E_{64}}(\C), \Z)=\mathscr{O}_K \cdot p_*\kappa$. 
\end{lem}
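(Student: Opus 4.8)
The plan is to follow the proof of Lemma~\ref{hom1}. Since $\mathscr{O}_K=\Z[\zeta]$ with $\zeta$ a primitive fourth root of unity, it suffices to show that $\{p_*\K,\ \zeta\cdot p_*\K\}$ is a symplectic basis of $H_1(E_{64}(\C),\Z)$, where $\zeta\cdot p_*\K$ is the cycle induced by the complex multiplication $\zeta\cdot(u,v)=(-u,\zeta v)$. Indeed, $H_1(E_{64}(\C),\Z)$ is a free $\Z$-module of rank $2$ carrying a unimodular intersection form, so once the intersection number $(p_*\K)\s(\zeta\cdot p_*\K)$ is shown to be a unit, the two cycles automatically generate the lattice, giving $H_1(E_{64}(\C),\Z)=\mathscr{O}_K\cdot p_*\K$.

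First I would identify the automorphism of $X_4$ that induces the complex multiplication. A direct substitution into $p$ shows $p\circ g^{3,2}=(\zeta\cdot)\circ p$, so that $\zeta\cdot p_*\K=p_*(g^{3,2}\K)$. Next I would record that $p$ is generically Galois: as $\deg p=2$ the extension is automatically Galois, and one checks that $g^{2,2}\colon(x,y)\mapsto(-x,-y)$ fixes $p$, whence $\operatorname{Gal}(X_4/E_{64})=\langle g^{2,2}\rangle$.

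With these in hand I would invoke the projection formula exactly as in Lemma~\ref{hom1}. Writing $p^*p_*=\sum_{g\in\langle g^{2,2}\rangle}g_*$ and using $(p_*\alpha)\s(p_*\beta)=\alpha\s(p^*p_*\beta)$ reduces the computation to intersection numbers of $\K$ with translates on $X_4$:
\begin{align*}
(p_*\K)\s(\zeta\cdot p_*\K)=\K\s(g^{3,2}\K)+\K\s(g^{1,0}\K).
\end{align*}
Since our $\K$ coincides with the cycle $\ok$ of \cite{Otsubo4}, each summand is given by Corollary~3.4 of loc. cit., and the total should come out to $1$, so that $\{p_*\K,\ \zeta\cdot p_*\K\}$ is a symplectic basis and the lemma follows.

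The step I expect to be delicate is purely a matter of bookkeeping: correctly determining which element of $G_4$ realizes the CM action and verifying $\operatorname{Gal}(X_4/E_{64})$, and then matching the indexing and sign conventions of the intersection-number formula in \cite{Otsubo4} so that the reduced sum evaluates to the unit $1$ rather than to $0$ or $\pm 2$. Pinning down the orientation of $\K$ compatibly with loc. cit. is the one genuinely error-prone point.
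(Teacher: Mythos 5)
Your proposal is correct and follows essentially the same route as the paper: both reduce the claim to showing that $\{p_*\K,\ \zeta\cdot p_*\K\}$ is a symplectic basis, use that $X_4$ is generically Galois over $E_{64}$ with group $\langle g^{2,2}\rangle$ together with the projection formula to rewrite $(p_*\K)\s(\zeta\cdot p_*\K)$ as a sum of intersection numbers of translates of $\K$ on $X_4$, and then evaluate that sum as $1$ via \cite[Corollary 3.4]{Otsubo4}. Your sum $\K\s(g^{3,2}\K)+\K\s(g^{1,0}\K)$ agrees with the paper's \eqref{int2} (note $g^{3,2}=g^{2,2}g^{1,0}$, so both $g^{1,0}$ and $g^{3,2}$ induce the CM action $\zeta\cdot$ on $E_{64}$), so the only difference is expository.
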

\begin{proof}
We prove that $\{p_*\K, \zeta \cdot  p_* \K\}$ is a symplectic basis of $H_1({E_{64}}(\C), \Z)$, where $\zeta  \cdot p_* \kappa$ is a cycle induced by the action $\zeta \cdot (u, v) = (-u, \zeta v)$. 
Note that $X_4$ is generically Galois over $E_{64}$ and $\operatorname{Gal}(X_4/E_{64}) = \langle g^{2, 2}\rangle$. 
Therefore we have 
\begin{align} \label{int2}
 (p_* \K) \s (\zeta \cdot p_* \K )&= \sum_{(r, s) \in [2, 2]}(g^{r, s} \K) \s (g^{1, 0} \K) 
\end{align}
by the projection formula. 
It follows from \cite[Corollary 3.4]{Otsubo4} that the right hand side of \eqref{int2} is equal to $1$, 
which means that 
$\{p_*\K$, $\zeta \cdot  p_*\K\}$ is a symplectic basis of $H_1({E_{64}}(\C), \Z)$. 
\end{proof}

Since $\int_{p_*\kappa} \omega_{E_{64}}=\int_{\kappa} p^*\omega_{E_{64}}$
and 
$\int_{\kappa}\tw^{1, 1}=(1-i)(1-i)=-2i$, 
we obtain $\Gamma = \mathscr{O}_K \Omega$ with
$$\Omega=  \Omega_{\R} = \sqrt{\pi}$$
by Lemma \ref{hom2}. 
Note that  we have the conductor $\bold{f}=(4)$, so if we let $\nu=4$, the assumption (\ref{assump}) is satisfied. 
Let $e_4$ be the Ross element.  

\begin{lem} \label{element}
We have 
$$p_*(e_4)= \left\{ \frac{v-2u}v, \frac{32u^2}{(u-2)^2v^2}\right\}+\left\{\frac{(u-2)^2}{u^2+4}, -\frac{v}{2u}\right\}. $$
\end{lem}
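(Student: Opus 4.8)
The plan is to compute the pushforward $p_*$ directly as a norm in Milnor $K$-theory. Since $p$ is generically Galois of degree $2$ with $\operatorname{Gal}(X_4/E_{64})=\langle g^{2,2}\rangle$ and $g^{2,2}$ acts by $(x,y)\mapsto(-x,-y)$, the map $p_*$ on $H^2_{\mathscr{M}}(-,\Q(2))$ is the norm $N_{L/F}$ for the quadratic extension $L=\Q(x,y)$ of $F=\Q(u,v)=\Q(E_{64})$. First I would record the dictionary between the two coordinate systems: from $p^*u=2(y^2+1)/x^2$ and $p^*v=4y(y^2+1)/x^3$ together with $x^4+y^4=1$ one gets $y/x=v/(2u)$ and, using $v^2=u^3-4u$,
\begin{align*}
x^2=\frac{4u}{u^2+4},\qquad y^2=\frac{u^2-4}{u^2+4},\qquad 1-x^2=\frac{(u-2)^2}{u^2+4},\qquad 1-y^2=\frac{8}{u^2+4}.
\end{align*}
In particular $x^2$, $y^2$ and $y/x$ all lie in $p^*F^{*}$, so $L=F(x)$ with $x^2\in F$.

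Next I would reduce $e_4=\{1-x,1-y\}$ to symbols evaluable by the projection formula. The degree-one norms I will use are $N_{L/F}(1-x)=(1-x)(1+x)=1-x^2$ and $N_{L/F}(1-y)=(1-y)(1+y)=1-y^2$, together with the vanishing fact $N_{L/F}\{a,g^{2,2}a\}=0$ in $K_2(F)\otimes\Q$ (its restriction to $L$ is $\{a,g^{2,2}a\}+\{g^{2,2}a,a\}=0$, and restriction is injective after $\otimes\Q$ since $N_{L/F}\circ\operatorname{res}=2$). Using bilinearity and the Steinberg relation I would rewrite $\{1-x,1-y\}$ so that in each resulting symbol one entry is a pullback from $F$ (a power of $x^2$, of $y^2$, of $y/x$, or of $1-x^2$, $1-y^2$), and then apply $N_{L/F}\{\text{--},\,p^*c\}=\{N_{L/F}(\text{--}),c\}$; collecting the symbols over $F$ should reproduce the asserted right-hand side.

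The cleanest way to organize and check this is to use that $p^{*}$ is injective after $\otimes\Q$, since $p_*p^{*}=\deg p=2$. Because $p^{*}p_{*}=1+g^{2,2}$, it then suffices to verify the single identity
$$\{1-x,1-y\}+\{1+x,1+y\}=p^{*}\!\left(\left\{\tfrac{v-2u}{v},\tfrac{32u^{2}}{(u-2)^{2}v^{2}}\right\}+\left\{\tfrac{(u-2)^{2}}{u^{2}+4},-\tfrac{v}{2u}\right\}\right)$$
in $K_2(\Q(x,y))\otimes\Q$. Here $p^{*}\big((u-2)^2/(u^2+4)\big)=1-x^2$, $p^{*}(v/2u)=y/x$, $p^{*}\big((v-2u)/v\big)=(y-x)/y$, and, after simplifying with the identity $(y^2+1-x^2)^2=2(y^2+1)(1-x^2)$ (which is just $x^4+y^4=1$ rearranged), one finds $p^{*}\big(32u^{2}/((u-2)^{2}v^{2})\big)=(1-y^2)x^2/\big(y^2(1-x^2)\big)$. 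Expanding both sides by bilinearity then reduces the claim to a finite sequence of Steinberg relations on $X_4$.

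I expect the main obstacle to be exactly this last step: genuinely evaluating the degree-$2$ symbol norm, i.e.\ massaging $\{1-x,1-y\}$—where neither entry lies in $F$—into projection-formula-ready pieces without running into circular, information-free cancellations of $N_{L/F}\{a,g^{2,2}a\}$ type. The algebraic simplifications tying $u,v$ to $x,y$ (especially $(y^2+1-x^2)^2=2(y^2+1)(1-x^2)$ and the repeated use of $x^4+y^4=1$) are where the care is needed; once the entries are correctly rewritten over $F$, the projection formula and the two degree-one norms finish the proof.
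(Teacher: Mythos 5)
Your setup is sound: the coordinate dictionary ($x^2=4u/(u^2+4)$, $y^2=(u^2-4)/(u^2+4)$, $y/x=v/(2u)$, and the pullbacks of the four entries of the asserted symbols, including the identity $p^{*}\bigl(32u^2/((u-2)^2v^2)\bigr)=(1-y^2)x^2/(y^2(1-x^2))$) all check out, and the reduction to verifying
$$\{1-x,1-y\}+\{1+x,1+y\}=p^{*}(\mathrm{RHS})$$
via $p^{*}p_{*}=1+g^{2,2}$ and injectivity of $p^{*}$ on $K_2\otimes\Q$ is legitimate. But the proof stops exactly where the lemma's content begins: you never exhibit the ``finite sequence of Steinberg relations'' that would establish this identity, and you yourself flag this as the main obstacle. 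Since neither entry of $\{1-x,1-y\}$ lies in $\Q(u,v)$, the two degree-one norms and the projection formula do not by themselves produce the decomposition; some genuinely new manipulation is required, and without it the argument is a plan rather than a proof.

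The paper closes this gap mechanically with the Rosset--Tate algorithm (Proposition \ref{RT}): take $g_0(T)=(T-1)^2-4u/(u^2+4)$, the minimal polynomial of $1-x$ over $E=\Q(u,v)$, and $g_1(T)=\tfrac{v}{2u}T+1-\tfrac{v}{2u}$, the linear polynomial with $g_1(1-x)=1-y$ (using $y=(v/2u)x$); the Euclidean-type recursion terminates after one more step with $g_2=32u^2/(v^2(u-2)^2)$, and the transfer is read off from the leading/trailing coefficients as $-\{c(g_0^{*}),c(g_1)\}-\{c(g_1^{*}),c(g_2)\}$, which is the asserted right-hand side. This is precisely the tool designed to avoid the ad hoc massaging your approach would require; if you want to keep your ``apply $p^{*}$ and compare'' framework, you would still need to either run Rosset--Tate or explicitly write down the Steinberg-relation chain, so I recommend adopting the algorithmic route.
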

To prove the lemma, we use the Rosset-Tate algorithm \cite{RT} as follows. 
Let $E \subset F$ be a finite field extension and 
$$\operatorname{Tr}_{F/E} \colon K_2^M(F) \to K_2^M(E)$$
be the trace map. 
For a polynomial 
$$f(T)=a_nT^n+a_{n-1}T^{n-1}+ \cdots + a_mT^m$$
where $n \geq m$ and $a_ma_n \neq 0$, 
put 
$$f^*(T)=(a_mT^m)^{-1}f(T), \quad c(f)=(-1)^na_n. $$

\begin{prop}[{\cite[Section 3]{RT}}] \label{RT}
Let $E \subset F$ be a finite extension of fields. For $x, y \in F^*$,
let $g \in E[T]$ be
the monic irreducible polynomial with root $x$ and 
$f \in E[T]$ be the polynomial of smallest degree such that $N_{F/E(x)} y = f(x)$.  
Let $g_0, g_1, \ldots, g_m \neq 0$, $g_{m+1}=0$ be the sequence of polynomials of strictly decreasing degree defined by
$$g_0=g, \quad g_1=f$$
and for $i \geq 1$, 
$$g_{i+1} \equiv g_{i-1}^* \pmod{g_i}$$
provided $g_i \neq 0$. 
Then we have
$$\operatorname{Tr}_{F/E}(\{x, y\})=-\sum_{i=1}^m\{c(g_{i-1}^*), c(g_i)\}.$$
\end{prop}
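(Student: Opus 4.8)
The pushforward $p_*$ restricts, on the $K_2$ of function fields, to the Rosset--Tate transfer $\operatorname{Tr}_{F/E}$, where I write $E=\Q(E_{64})=\Q(u,v)$ and $F=\Q(X_4)=\Q(x,y)$; so the plan is to evaluate $\operatorname{Tr}_{F/E}(\{1-x,1-y\})$ by Proposition \ref{RT}. First I would record the extension structure. The nontrivial element of $\operatorname{Gal}(X_4/E_{64})=\langle g^{2,2}\rangle$ acts by $(x,y)\mapsto(-x,-y)$, and both $u$ and $v$ are fixed by it while $x$ is not; since $[F:E]=2$ this gives $F=E(x)=E(1-x)$, so in Proposition \ref{RT} no intermediate norm is needed and the second symbol entry may be used directly.

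Next I would make the extension explicit over $E$. The formulas defining $p$ give $v=2uy/x$, hence $y=\tfrac{v}{2u}\,x$, while $u=2(y^2+1)/x^2$ gives $y^2=\tfrac{u}{2}x^2-1$; eliminating $y$ and using $v^2=u^3-4u$ yields $x^2=\tfrac{4u}{u^2+4}$. Consequently the minimal polynomial of the first entry $1-x$ over $E$ and the (linear) polynomial representing the second entry are
$$g_0(T)=T^2-2T+\frac{(u-2)^2}{u^2+4},\qquad g_1(T)=\frac{v}{2u}\,T+\frac{2u-v}{2u},$$
the latter characterized by $g_1(1-x)=1-y$.

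Then I would run the recursion of Proposition \ref{RT}. From the normalizations one reads off
$$c(g_0^*)=\frac{u^2+4}{(u-2)^2},\qquad c(g_1)=-\frac{v}{2u},\qquad c(g_1^*)=-\frac{v}{2u-v},$$
while $g_2\equiv g_0^*\pmod{g_1}$ is the constant $g_0^*(\rho)$ obtained by evaluating at the root $\rho=\tfrac{v-2u}{v}$ of $g_1$. Here the curve equation does the essential work: after clearing denominators the numerator of $g_0^*(\rho)$ reduces, via $u^3+4u-v^2=8u$, to $32u^2$, so that $g_2=c(g_2)=\tfrac{32u^2}{(u-2)^2v^2}$. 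Assembling $\operatorname{Tr}_{F/E}(\{1-x,1-y\})=-\{c(g_0^*),c(g_1)\}-\{c(g_1^*),c(g_2)\}$ and simplifying each factor by $\{a^{-1},b\}=-\{a,b\}$ turns the two terms into $\bigl\{\tfrac{v-2u}{v},\tfrac{32u^2}{(u-2)^2v^2}\bigr\}$ and $\bigl\{\tfrac{(u-2)^2}{u^2+4},-\tfrac{v}{2u}\bigr\}$, which is the asserted formula.

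The main obstacle is the single reduction step $g_2\equiv g_0^*\pmod{g_1}$ together with its algebraic collapse: a priori $g_0^*(\rho)$ is an unwieldy rational function of $u$ and $v$, and only the repeated use of $v^2=u^3-4u$ forces its numerator down to the clean $32u^2$ that produces the stated symbol. A secondary point to check carefully is that $p_*$ genuinely coincides with $\operatorname{Tr}_{F/E}$ on $K_2^M$ and that $1-x$ indeed generates $F$ over $E$, so that Proposition \ref{RT} applies with $g_0,g_1$ as above and with no nontrivial norm on the second entry.
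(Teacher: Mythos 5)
Your proposal does not prove Proposition \ref{RT}; it \emph{applies} it. The proposition is a general identity for the transfer $\operatorname{Tr}_{F/E}$ on Milnor $K_2$ for an arbitrary finite extension of fields, which the paper imports from Rosset--Tate without proof. What you have written is, almost verbatim, the paper's proof of Lemma \ref{element}: you specialize to $E=\Q(u,v)\subset F=\Q(x,y)$, derive $x^2=4u/(u^2+4)$ and $y=vx/(2u)$, form $g_0(T)=(T-1)^2-4u/(u^2+4)$ and $g_1(T)=\frac{v}{2u}T+1-\frac{v}{2u}$, run one division step to get $g_2=32u^2/((u-2)^2v^2)$, and assemble the two symbols. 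That computation is correct and coincides with the paper's (your $T^2-2T+(u-2)^2/(u^2+4)$ is the same $g_0$), but it presupposes the very formula $\operatorname{Tr}_{F/E}(\{x,y\})=-\sum_{i=1}^m\{c(g_{i-1}^*),c(g_i)\}$ that is to be established; as a proof of Proposition \ref{RT} it is circular, and nothing in your text explains why the trace of a symbol should equal this alternating sum of constant symbols for general $E\subset F$.

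A genuine proof runs along entirely different lines. First, the projection formula reduces to the case $F=E(x)$: since $x\in E(x)$, one has $\operatorname{Tr}_{F/E(x)}(\{x,y\})=\{x,N_{F/E(x)}y\}=\{x,f(x)\}$, which is exactly where the hypothesis $N_{F/E(x)}y=f(x)$ enters (in your example $[F:E]=2$ and $F=E(1-x)$, so this step is invisible). Then one works in $K_2(E(T))$ and uses the reciprocity law for tame symbols on $\mathbb{P}^1_E$, namely that $\sum_v \operatorname{Tr}_{\kappa(v)/E}(T_v(\chi))=0$ with the sum over all closed points including $T=\infty$: applying this to symbols such as $\{g_{i-1}^*(T),g_i(T)\}$, the tame symbols at the places $(g_{i-1})$ and $(g_i)$ produce the traces $\operatorname{Tr}_{E[T]/(g_{i-1})/E}$ and $\operatorname{Tr}_{E[T]/(g_i)/E}$ of the relevant symbols, while the places $T=0$ and $T=\infty$ contribute precisely the constant symbols built from $c(g_{i-1}^*)$ and $c(g_i)$; the normalization $g^*$ is designed so that these boundary contributions take the stated form. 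Division with remainder gives $g_{i+1}\equiv g_{i-1}^*\pmod{g_i}$ with $\deg g_{i+1}<\deg g_i$, and induction on this strictly decreasing sequence terminates the recursion and yields the formula. None of this Euclidean--reciprocity mechanism appears in your proposal. Had the target been Lemma \ref{element}, your argument would be correct and essentially identical to the paper's; as a proof of Proposition \ref{RT} itself, the entire content is missing.
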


\begin{proof}[Proof of Lemma \ref{element}]
We regard the function field $\Q(E_{64})=\Q(u, v)$ as a subfield of $\Q(X_{4})=\Q(x, y)$ by the inclusion map induced by $p$.
Put $E=\Q(u, v)$ and $F=\Q(x, y)$. 
Then 
the minimal polynomial of $1-x$ is given by
$$g_0(T)=(T-1)^2-\frac{4u}{u^2+4}, \quad g_0^*(T)=\frac{u^2+4}{(u-2)^2}g_0(T).$$
Note that $E(1-x)=F$, hence we require $g_1\in E[T]$ to be the polynomial such that $g_1(1-x)=1-y$. 
One sees easily that
$$g_1(T)=\frac{v}{2u}T+1-\frac{v}{2u}, \quad g_1^*(T)=\frac{2u}{2u-v}g_1(T). $$
By the definition of $g_i$, we have
$$g_2(T)=\frac{32u^2}{v^2(u-2)^2}, \quad g_i(T)=0\ (i \geq 3).$$
By Proposition \ref{RT}, we obtain
\begin{align*}
p_*(e_4)&=-\{c(g_0^*), c(g_1)\}-\{c(g_1^*), c(g_2)\} \\
&=-\left\{\frac{u^2+4}{(u-2)^2}, -\frac{v}{2u}\right\}
-
\left\{\frac{v}{v-2u}, \frac{32u^2}{(u-2)^2v^2}\right\} \\
&= \left\{ \frac{v-2u}v, \frac{32u^2}{(u-2)^2v^2}\right\}+\left\{\frac{(u-2)^2}{u^2+4}, -\frac{v}{2u}\right\}, 
\end{align*}
which finishes the proof.
\end{proof}

\begin{prop} \label{keyprop2}
Let $e_{E_{64}}$ be the Bloch element and $e_4$ be the Ross element. Then we have 
$e_{E_{64}}=2p_*(e_4)$ in $H_{\mathscr{M}}(E_{64}, \Q(2))_{\Z}$. 
\end{prop}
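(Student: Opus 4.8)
The plan is to mimic the proof of Proposition \ref{keyprop1}: reduce the identity $e_{E_{64}}=2p_*(e_4)$ in $H^2_{\mathscr{M}}(E_{64},\Q(2))_{\Z}$ to an equality of classes in $B_3(E_{64})\otimes\Q$ via the injective Bloch map $\overline{\beta}$. Since $\overline{\beta}$ is injective on $K_2(E_{64})\otimes\Q$, and since any norm-compensation term $N_{k/\Q}(\sum\{h_i,c_i\})$ appearing in the definition of the Bloch element $e_{E_{64}}=e(\alpha,\beta)$ is annihilated by $\overline{\beta}$ (the second argument being constant), it suffices to compute $\overline{\beta}(e_0(\alpha,\beta))$ and $\overline{\beta}(p_*(e_4))$ separately and check that the former equals twice the latter.

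First I would set up the arithmetic of the torsion points. Here $K=\Q(i)$, $\bold{f}=(4)$, and $\mu_K=\{\pm1,\pm i\}$ has order $4$, so $(\mathscr{O}_K/\bold{f})^*/\mu_K$ is no longer trivial as it was for $N=36$; I must therefore identify $E_{\bold{f}}\simeq\mathscr{O}_K/(4)$ explicitly via $x\mapsto x\overline{\nu}/\Omega$ with $\nu=4$, $\Omega=\sqrt{\pi}$, locate the relevant $4$-torsion points on $v^2=u^3-4u$ (the $2$-torsion points are $O$, $(0,0)$, $(\pm2,0)$), and evaluate the finite character $\chi_f$ on $E_{\bold{f}}/\mu_K$ to write down $\alpha=\sum_{x\in E_{\bold{f}}}([x]-[O])$ and $\beta=\sum_{x\in E_{\bold{f}}/\mu_K}([\chi_f(\overline{x})x]-[O])$. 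Then I would compute the divisors of $f_\alpha$ and $f_\beta$ and apply $\overline{\beta}$, using the relations $[p]+[-p]=0$ (hence $[p]=0$ for $2$-torsion $p$) and the $\bold{f}$-torsion translation invariance $\sum_{x\in E_{\bold{f}}}[x-P]=\sum_{x\in E_{\bold{f}}}[x]$, exactly as in \eqref{beta1}, to collapse $\overline{\beta}(e_0(\alpha,\beta))$ to a short integer combination of torsion classes.

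Next I would compute $\overline{\beta}(p_*(e_4))$ from the explicit symbol of Lemma \ref{element}. Applying $\overline{\beta}$ to each of the two factors $\{(v-2u)/v,\,32u^2/((u-2)^2v^2)\}$ and $\{(u-2)^2/(u^2+4),\,-v/(2u)\}$ requires the divisors of the six rational functions $v-2u$, $v$, $32u^2$, $(u-2)^2$, $u^2+4$, $2u$ on $E_{64}$ in terms of torsion points, then forming the prescribed sums $[p_i-q_j]$. This yields a combination of classes $[R]$ for various torsion points $R$, several of which I expect to vanish in $B_3(E_{64})\otimes\Q$ either because they are $2$-torsion or because, as in the $[R]=0$ argument around \eqref{beta2}, they lie in the image $\beta(f\otimes(1-f))$ of a suitable Steinberg relation; for each nonvanishing class I would exhibit such an $f$ (an analogue of $\{\tfrac12(1-v),\tfrac12(1+v)\}$) to kill it.

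The main obstacle will be the bookkeeping in the last step: because $\mu_K$ has order $4$ the divisors from Lemma \ref{element} involve genuinely more torsion points than in the $N=36$ case, and I expect several spurious classes $[R]$ to appear that must be individually shown to vanish via Steinberg relations in $B_3(E_{64})\otimes\Q$. Finding the right auxiliary functions $f$ with $f,1-f$ having divisors supported on the offending points — and correctly tracking the coefficient $2$ that must emerge from comparing $\overline{\beta}(e_0(\alpha,\beta))$ with $\overline{\beta}(p_*(e_4))$ — is the delicate part; once both sides reduce to the same nonzero multiple of a single torsion class $[P]$, injectivity of $\overline{\beta}$ closes the argument.
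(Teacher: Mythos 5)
Your proposal follows essentially the same route as the paper's proof: identify the relevant torsion points ($S,T$ with $2S=2T=P_0$, corresponding to $1$ and $1-2i$ in $\mathscr{O}_K/(4)$), compute $\overline{\beta}(e_0(\alpha,\beta))=16([S]+[T])$ and $\overline{\beta}(p_*(e_4))=8([S]+[T])$ from the explicit divisors, and conclude by injectivity of $\overline{\beta}$. The only (harmless) difference is that you anticipate needing auxiliary Steinberg relations to kill spurious classes, whereas in this case every unwanted class turns out to be supported on $2$-torsion points and vanishes for free.
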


\begin{proof}
Let $O$ (resp. $P_i$, $Q_i$) be the image under $p$ of $O_0^{\prime}$ (resp. $P_i^{\prime}$, $Q_i^{\prime}$), and  
$O$ be the origin of $E_{64}$. 
We put 
$$R=(0 : 0 :1), \quad S=(2+2\sqrt{2} :4+4\sqrt{2} : 1), \quad T=(2 -2\sqrt{2} : 4-4\sqrt{2} : 1)$$
 on $E_{64}$.
Note that $(\mathscr{O}_K/\bold{f})^*/\mu_4=\{1, 1-2i\}$, and 
since 
$$\int_O^{P_0}\omega_{E_{64}}= \int_{O'_0}^{P'_0} p^*\omega_{E_{64}}= \dfrac{2\Omega}{\overline{\nu}} \int_{O'_0}^{P'_0}\tw^{1, 1}=\dfrac{2\Omega}{\overline{\nu}}, $$
$P_0$ corresponds to $2$ under $E_{\bold{f}} \simeq \mathscr{O}_K/\bold{f}$. 
One can show that $2S=2T=P_0$, hence $S$, $T$ correspond to one of 
$$\pm1, \quad \pm1 \pm 2i \quad $$
under $E_{\bold{f}} \simeq \mathscr{O}_K/\bold{f}$. 
By using Mathematica, we verify that 
$S$, $T$ correspond to $1$, $1-2i$ respectively. 
By \cite[Section 4.1]{Otsubo3}, we have $\chi_{f}(1-2i)=1$, 
hence
the divisors of Theorem \ref{Bloch} are
$$\dv(\fa)=\sum_{x \in E_{\bold{f}}}[x]-16[O], \quad \dv(\fb)=[S]+[T]-2[O].$$
Applying the map $\overline{\beta}$ to $e_0(\alpha, \beta)=\{\fa, \fb\}$, we have
\begin{align*}
\overline{\beta}(e_0(\alpha, \beta)) 
&=\sum_{x \in E_{\bold{f}}}[x-S]+\sum_{x \in E_{\bold{f}}}[x-T]-2\sum_{x \in E_{\bold{f}}}[x] 
-16[-S]-16[-T]+32[O].
\end{align*}
We note that $S$, $T$ are $\bold{f}$-torsion points, hence we have
$$\sum_{x \in E_{\bold{f}}}[x-S]=\sum_{x \in E_{\bold{f}}}[x-T]=\sum_{x \in E_{\bold{f}}}[x]. $$
Since $-[S] = [-S]$, $-[T]=[-T]$ and $[O]=0$ in $B_3(E_{64}) \otimes \Q$, we have
\begin{align} \label{beta3}
\overline{\beta}(e_0(\alpha, \beta))=16([S]+[T]). 
\end{align} 
On the other hand, we put 
$$f_1=\frac{v-2u}{v}, \quad g_1=\frac{32u^2}{(u-2)^2v^2}.$$
Then the divisors of $f_1$ and $g_1$ are
\begin{align*}
\operatorname{div}(f_1)&=[S]+[T]-[P_0]-[P_1], \quad  \operatorname{div}(g_1)=2[R]+6[O]-6(P_0)-2(P_1).
\end{align*}
Applying the map $\overline{\beta}$ to $\{f_1, g_1\}$, we have
\begin{align*}
\overline{\beta}(\{f_1, g_1\}) &=2[S-R]+6[S]+2[T-R]+6[T]+6[O]+2[P_0-P_1]+2[O] \\
&+6[P_1-P_0] -6[S-P_0]-2[S-P_1]-6[T-P_0]-2[T-P_1] \\
&-2[P_0-R] -6[P_0]-2[P_1-R]-6[P_1].
\end{align*}
We compute that 
$$S-R=T-P_0=P_1-S=-T, \quad  T-R=S-P_0=P_1-T=-S, $$
$$P_0-P_1=R, \quad  P_0-R=P_1, \quad  P_1-R=P_0, $$
hence we have
$$\overline{\beta}(\{f_1, g_1\})=8([S]+[T]+[O]+[P_0]+[P_1])-4[R].$$
We note that the points $O$, $P_0$, $P_1$ and $R$ are $2$-torsion points, hence we have $[O]=[P_0]=[P_1]=[R]=0$ in $B_3(E_{64}) \otimes \Q$. 
Therefore we obtain
\begin{align} \label{beta4}
\overline{\beta}(\{f_1, g_1\})= 8([S]+[T]). 
\end{align}
We put 
$$f_2= \frac{(u-2)^2}{u^2+4}, \quad g_2=-\frac{v}{2u}.$$
The divisors of $f_2$ and $g_2$ are
\begin{align*}
&\operatorname{div}(f_2)=2[P_0]+2[O]-[Q_0]-[-Q_0]-[Q_3]-[-Q_3], \\
&\operatorname{div}(g_2)=[P_0]+[P_1]-[R]-[O],
\end{align*}
hence we can prove similarly that $\overline{\beta}(\{f_2, g_2\})=0$  as above.
Hence we have as before 
$$\overline{\beta}(e_{E_{64}})=\overline{\beta}(e_0(\alpha, \beta))
=2\overline{\beta}(p_*(e_4)) $$ 
by comparing $(\ref{beta3})$ and $(\ref{beta4})$. 
Since $\overline{\beta}$ is injective, it concludes that 
$$e_{E_{64}}=2p_*(e_4) \in H^2_{\mathscr{M}}(E_{64}, \Q(2))_{\Z}, $$
which finishes the proof. 
\end{proof}

\begin{thm} \label{main2}
Let $E_{64}$ be an elliptic curve of conductor 64 over $\Q$. Then we have
$$L^*(E_{64}, 0)=\frac{1}{8\pi}\left(\widetilde{F}\left( \frac14, \frac14 \right) - \widetilde{F}\left( \frac34, \frac34 \right) \right).$$
\end{thm}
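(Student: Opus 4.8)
The plan is to run the argument of Theorem \ref{main1} essentially unchanged, since the substantive comparison of Bloch's and Ross's elements is already furnished by Proposition \ref{keyprop2}. First, as $L(E_{64},s)$ is an isogeny invariant, I would reduce to the explicit model $E_{64}\colon v^2=u^3-4u$ analyzed above, for which the period computations already give $\Omega=\Omega_{\R}=\sqrt{\pi}$ (so that $h=1$ in $\Omega_{\R}=h\Omega$) and $\bold{f}=(4)$ with $N(\bold{f})=16$. Feeding these data into Theorem \ref{Bloch} I obtain
$$\reg(e_{E_{64}})=\pm\frac{N(\bold{f})^{1/2}}{2|h|}L^*(E_{64},0)\,\Omega_{\R}(\omega_{E_{64}}-\overline{\omega}_{E_{64}})=\pm 2\sqrt{\pi}\,L^*(E_{64},0)(\omega_{E_{64}}-\overline{\omega}_{E_{64}}).$$

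Next I would apply $p^*$ to both sides and substitute Proposition \ref{keyprop2} in the form $p^*p_*(e_4)=\tfrac12\,p^*(e_{E_{64}})$. Using the normalization $p^*\omega_{E_{64}}=\tfrac{\sqrt{\pi}}{2}\tw^{1,1}$ together with $c_{\infty}\tw^{1,1}=\tw^{-1,-1}=\tw^{3,3}$, so that $p^*\overline{\omega}_{E_{64}}=\tfrac{\sqrt{\pi}}{2}\tw^{3,3}$, the right-hand side becomes
$$\reg(p^*p_*(e_4))=\pm\frac{\pi}{2}L^*(E_{64},0)(\tw^{1,1}-\tw^{3,3}).$$

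For the other side, since $\deg p=2$ and $h^1(E_{64})\simeq X_4^{[1,1]}$, I would use $p^*p_*(e_4)=2\,e_4^{[1,1]}$ and apply Theorem \ref{Otsubo} to the primitive pair $(1,1)\in I_4$ (indeed $\gcd(1,1,4)=1$). The orbit $[1,1]=\{(1,1),(3,3)\}$ collapses to a single class modulo $\pm1$, so exactly one term survives and
$$\reg(p^*p_*(e_4))=2\reg(e_4^{[1,1]})=-\frac{1}{16}\left(\widetilde{F}\left(\tfrac14,\tfrac14\right)-\widetilde{F}\left(\tfrac34,\tfrac34\right)\right)(\tw^{1,1}-\tw^{3,3}).$$
Equating the two expressions for $\reg(p^*p_*(e_4))$ and cancelling the nonzero generator $\tw^{1,1}-\tw^{3,3}$ of $H^1(X_4(\C),\R(1))^+$ yields $L^*(E_{64},0)=\mp\tfrac{1}{8\pi}(\widetilde{F}(\tfrac14,\tfrac14)-\widetilde{F}(\tfrac34,\tfrac34))$ up to the sign left open by Theorem \ref{Bloch}.

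Finally, that sign is pinned down by positivity, exactly as in Theorem \ref{main1}: the root number of $E_{64}$ is $+1$ and $L(E_{64},2)>0$, so $L^*(E_{64},0)>0$, while the monotonicity of $\widetilde{F}$ in each parameter \cite[Proposition 4.25]{Otsubo} makes $\widetilde{F}(\tfrac14,\tfrac14)-\widetilde{F}(\tfrac34,\tfrac34)$ positive; hence the sign is forced so that $L^*(E_{64},0)=\tfrac{1}{8\pi}(\widetilde{F}(\tfrac14,\tfrac14)-\widetilde{F}(\tfrac34,\tfrac34))$, which is the assertion. Because Proposition \ref{keyprop2} already does the conceptual work, I expect the only real obstacle to be purely bookkeeping: combining the factor $N(\bold{f})^{1/2}/(2|h|)=2$ with the two independent $\sqrt{\pi}$'s (one from $\Omega_{\R}$, one from $p^*\omega_{E_{64}}$) so that the powers of $\pi$ collapse precisely to $\tfrac{1}{8\pi}$, and confirming that $\tw^{1,1}-\tw^{3,3}$ is genuinely nonzero so that the final cancellation is legitimate.
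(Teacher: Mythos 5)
Your proposal is correct and follows the same route as the paper's (very terse) proof: reduce to the model $v^2=u^3-4u$, feed $N(\bold{f})^{1/2}/(2|h|)=2$ and $\Omega_{\R}=\sqrt{\pi}$ into Theorem \ref{Bloch}, substitute Proposition \ref{keyprop2}, pull back by $p$, compare with Theorem \ref{Otsubo} for the single class $(1,1)\in[1,1]/\pm 1$, and fix the sign by positivity of $L^*(E_{64},0)$ and monotonicity of $\widetilde{F}$. The one point where you diverge from the paper's text is the coefficient in $p^*p_*(e_4)=c\,e_4^{[1,1]}$: the paper asserts $c=4$, while you take $c=2$. Your value is the one consistent with the rest of the setup: since $p$ is generically Galois of degree $2$ with $\operatorname{Gal}(X_4/E_{64})=\langle g^{2,2}\rangle$, one has $p^*p_*=1+(g^{2,2})^*=2\,p^{[1,1]}$ on the relevant part, exactly parallel to the factor $6=\deg p$ appearing in the $N=36$ case; and indeed $c=2$ yields $\frac{1}{8\pi}$ as stated, whereas $c=4$ would produce $\frac{1}{4\pi}$, contradicting the theorem (and Ito's independent analytic evaluation). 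So your bookkeeping is right, and the paper's displayed factor of $4$ appears to be a slip. All remaining steps in your write-up (the collapse of $[1,1]=\{(1,1),(3,3)\}$ modulo $\pm 1$ to one term, the identification $p^*\overline{\omega}_{E_{64}}=\frac{\sqrt{\pi}}{2}\tw^{3,3}$, and the nonvanishing of $\tw^{1,1}-\tw^{3,3}$ as a basis element of $H^1(X_4(\C),\R(1))^+$) are sound.
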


\begin{proof}
It is known that the isogeny class over $\Q$ of $E_{64}$ is unique \cite{LMDBF}, hence it suffices to show the case $E_{64}:v^2=u^3-4u$.  
We note that $p^*p_*(e_4)=4p^{[1,  1]}e_4=4e_4^{[1, 1]}$. Therefore, by Proposition \ref{keyprop2} and the fact that the root number is 1, 
we can prove the theorem similarly as in the proof of Theorem \ref{main1}. 
\end{proof}

\section*{Acknowledgment}
The author would like to thank sincerely  Noriyuki Otsubo for valuable discussions and many helpful comments on a draft version of this paper, and 
for his warm and constant encouragement. 
The author would also express his sincere gratitude to the anonymous referee for many helpful comments. 
This paper is a part of the outcome of research performed under Waseda University Grant for Special Research Projects (Project number: 2023C-274) and Kakenhi Applicants (Project number: 2023R-044).

\end{document}